\pgfplotsset{compat=newest}
\newcommand{\tikzsetnextfilename}[1]{}% for arxiv.org
\theoremstyle{theorem}
\newtheorem{conjecture}[theorem]{Conjecture}
\def\ajrSplit#1#2\ajrEndSplit{\def\ajrcar{#1}\def\ajrcdr{#2}}
\def\Vec{}
\renewcommand{\Vec}[1]{%
  \typeout{**** Command  \ #1v denotes vector #1}%
  \expandafter\ajrSplit#1\ajrEndSplit%
  \ifx\ajrcdr\empty % single character
    \expandafter\def\csname#1v\endcsname%
    {\ensuremath{\vec #1}}%
  \else % multicharacter e.g. greek
    \expandafter\def\csname#1v\endcsname%
    {\ensuremath{\vec{\csname#1\endcsname}}}%
  \fi%
  }
\newcommand{\Bb}[1]{%
  \expandafter\def\csname#1#1\endcsname%
  {\ensuremath{\mathbb #1}}}
\newcommand{\Cal}[1]{%
  \expandafter\def\csname c#1\endcsname%
  {\ensuremath{\mathcal #1}}}
\newcommand{\secref}[1]{\ref{sec:#1}}
\title{Smooth subgrid fields underpin rigorous closure in spatial discretisation of reaction-advection-diffusion \pde{}s}
\author{G.~A. Jarrad\thanks{\protect\url{mailto:geoff.jarrad@adelaide.edu.au}}
\and 
A.~J. Roberts\thanks{School of Mathematical Sciences, University of Adelaide, South Australia~5005, Australia.  
\protect\url{mailto:anthony.roberts@adelaide.edu.au}}
}
\begin{document}
\maketitle

\begin{abstract}
Finite difference\slash element\slash volume methods of discretising \pde{}s impose a subgrid scale interpolation on the dynamics. 
In contrast, the holistic discretisation approach developed herein constructs a natural subgrid scale field adapted to the whole system out-of-equilibrium dynamics.
Consequently, the macroscale discretisation is fully informed by the underlying microscale dynamics. 
We establish a new proof that in principle there exists an exact closure of the dynamics of a general class of reaction-advection-diffusion \pde{}s, and show how our approach constructs new systematic approximations to the in-principle closure starting from a simple, piecewise-linear, continuous approximation.
Under  inter-element coupling conditions that guarantee continuity of several field properties, 
the holistic discretisation possesses desirable properties such as a natural cubic spline first-order approximation to the field, and the self-adjointness of the diffusion operator under periodic, Dirichlet and Neumann macroscale boundary conditions.
As a concrete example, we demonstrate the holistic discretisation procedure on the well-known Burgers' \pde{},
and compare the theoretical and numerical stability of the resulting discretisation to other approximations.
The approach developed here promises to be able to systematically construct automatically good, macroscale discretisations to a wide range of \pde{}s, including wave \pde{}s. 
\end{abstract}

\section{Introduction}\label{sec:intro}

This article's scope is the accurate and stable spatial discretisation of nonlinear \pde{}s for a field~\(u(x,t)\) satisfying reaction-advection-diffusion \pde{}s in the general form
\begin{equation}
u_t=F(u_x)_x+\alpha G(x,u,u_x)
\label{eq:genpde}
\end{equation}
for suitably smooth functions~\(F\) and~\(G\), and \(F\)~strictly monotonic increasing,
where subscripts \(x\) and~\(t\) denote spatial and temporal derivatives, respectively. 
Although most of this article addresses \pde~\eqref{eq:genpde}, Section~\ref{sec:smwpde} discusses generalising the theoretical support to wave-like \pde{}s obtained by replacing~\(u_t\) by~\(u_{tt}\) in~\eqref{eq:genpde}.
Given \(N+1\)~discrete points in 1D space, \(x=X_j\) for \(j\in\JJ=\{0,1,\ldots,N\}\)\,, we define grid values \(U_j(t)=u(X_j,t)\)\,.
Then the aims are to use centre manifold theory \cite[e.g.]{Carr81} to (\S\ref{sec:lin}): firstly, establish a new proof that in principle there exists an \emph{exact} closure of the dynamics of the \pde~\eqref{eq:genpde} in terms of these grid values, \(d\Uv/dt=\gv(\Uv)\); secondly, establish that such a closure is emergent from general initial conditions; and thirdly, show how to construct new systematic approximations to the in-principle closure.
This new theory is applied in Sections~\ref{sec:nonlin} and~\ref{sec:numeric} to construct and evaluate the new approach for the classic example of the nonlinear advection--diffusion Burgers' \pde
\begin{eqnarray}
	\D tu = \nu\DD xu - \alpha u\D xu\,.
\label{eq:burgers}
\end{eqnarray}
Generalisation of the approach to two or more spatial dimensions remains for further research but should be analogous to that established by \cite*{Roberts2011a}.

The spatial domain~\(\XX\) is of length~\(L\), \(0\leq x\leq L\)\,, and we mostly restrict attention to solutions~\(u(x,t)\) which are \(L\)-periodic in space, but occasionally comment on the cases of homogeneous Dirichlet boundary conditions, \(u(0,t)=u(L,t)=0\)\,, and Neumann boundary conditions, \(u_x(0,t)=u_x(L,t)=0\)\,.
The first step is to partition~$\XX$ into $N$~equi-spaced intervals bounded by the $N+1$ grid-points~$X_j$ with spacing~\(H\).
Traditional spatial discretisation of such \pde{}s, whether finite difference, finite element, or finite volume, imposes assumed fields in each element and then derives approximate rules for the evolution in time of the parameters of the imposed fit.
Our dynamical systems (holistic) approach is to let the \pde~\eqref{eq:genpde} determine the subgrid fields in order to remain faithful to the \pde, as demonstrated explicitly for Burgers' \pde~\eqref{eq:burgers}.
The multiscale derivation of the so-called stabilized schemes \cite[e.g.]{Hughes95} appears analogous to the first step of the construction described by Section~\ref{sec:nonlin}.
A previous dynamical systems approach constructs subgrid fields by systematically refining a piecewise constant initial approximation \cite[e.g.]{Roberts98a, Roberts00a, Roberts2011a}---an approach that adapts to the multi-scale gap-tooth scheme \cite[e.g.]{Roberts06d, Kevrekidis09a}.
The new approach here systematically refines a continuous piecewise linear initial approximation with the aim of more accurately encoding subgrid scale effects in the macroscale closure.

We aim for the dynamics of the field~$u(x,t)$ to be summarised by the macroscale coarse variables ${\Uv}=(U_0,U_1,\ldots,U_N)$, 
where we choose these coarse variables to be the grid values 
\begin{equation}
U_j(t):=u(X_j,t) \quad\text{for all } j\in\JJ,t\in \TT.
\label{eq:gridval}
\end{equation}
Henceforth we assume $U_0=U_N$ due to the imposed periodicity, unless otherwise stated.
New theory developed in Section~\ref{sec:lin} asserts that in principle an \emph{exact} closure exists (a slow manifold); that is, there is some system of \ode{}s 
\begin{eqnarray}
	\dot U_j = g_j({\Uv}) \quad\text{for all }j\in\JJ\,,
\label{eq:temporal}
\end{eqnarray}
that gives exact solutions of the \pde.
A traditional approach is to use centred approximations:
\begin{eqnarray*}
\dot U_j&\approx& -\alpha\frac1{2H}U_j(U_{j+1}-U_{j-1})+\nu\frac1{H^2}(U_{j+1}-2U_j+U_{j-1})
\\&&{}=-\alpha U_j\mu\delta U_j/H+\nu\delta^2 U_j/H^2,
\end{eqnarray*}
for centred difference $\delta=\sigma^{1/2}-\sigma^{-1/2}$,
centred mean $\mu=(\sigma^{1/2}+\sigma^{-1/2})/2$,
and shift operator $\sigma U_j=U_{j+1}$.
However, the nonlinear advection term has another plausible representation, namely the conservative form~$\mu\delta (U_j^2)/2H$.
For illustrative purposes, Section~\ref{sec:nonlin} compares results with Burgers' \pde~\eqref{eq:burgers} discretised to the so-called mixture model
\begin{equation}
	\dot{U}_j = 
	-(1-\theta)\alpha\frac{U_j\mu\delta U_j}{H}
    -\theta\alpha\frac{\mu\delta (U_j^2)}{2H}
    +\nu\frac{\delta^2 U_j}{H^2}\,.
\label{eq:mixture}
\end{equation}
In contrast, Section~\secref{nonlin} shows our holistic approach has no such representational ambiguity, and constructs at first-order the specific model
\begin{equation}
	\dot{U}_j = S\left[
	-\alpha\frac{U_j\mu\delta U_j}{3H}
    -\alpha\frac{\mu\delta (U_j^2)}{3H}
    +\nu\frac{\delta^2 U_j}{H^2} \right],
\label{eq:holistic1}
\end{equation}
for nonlocal operator $S=(1+\delta^2/6)^{-1}$. 
Apart from the operator~$S$,
this holistic model matches the mixture model~\eqref{eq:mixture} for parameter $\theta=\frac{2}{3}$.
This parameter value is exactly the critical value shown by \cite{Fornberg73} to be necessary for stable simulation (with $\nu=0$ and $\alpha=1$) for a selection of numerical integration schemes.
Section~\secref{numeric} further compares the numerical behaviour of our holistic and established mixture models.

A crucial part of the new methodology is to express the physical field~\(u(x,t)\) naturally in terms of the coarse variables~${\Uv}(t)$ for out-of-equilibrium dynamics. 
That is, as illustrated by the two approximate examples of Figure~\ref{fig:uhat}, we construct the field (a slow manifold)
\begin{eqnarray}
	u(x,t)  = {u}(x,{\Uv(t)}),
	\label{eq:spatial}
\end{eqnarray}
where the time evolution of the field~\(u\) occurs via the evolving coarse variables~\(\Uv(t)\).
Whether the symbol~\(u\) denotes~\(u(x,t)\) or~\(u(x,\Uv)\) should be clear from the context.
The complete holistic framework comprises equations~\eqref{eq:temporal} and~\eqref{eq:spatial},  in conjunction with suitable boundary 
and  inter-element coupling conditions to be specified  
in more detail in Section~\secref{lin}. 

In particular, the Rayleigh--Ritz theorem motivates coupling conditions that give a piecewise linear function as the leading approximation (e.g., the blue~\(u^0\) of Figure~\ref{fig:uhat}).
Approximately constructing a slow manifold is analogous to estimating eigenvalues of a perturbed matrix.
For a self-adjoint operator~\cL, the Rayleigh--Ritz theorem is that an approximate eigenvector~\vv, with error~\Ord{\epsilon},
predicts a corresponding eigenvalue $\lambda = {\langle{\vv},{\cL}{\vv}\rangle}/{\|{\vv}\|^2}$ with asymptotically smaller error~\Ord{\epsilon^2}.
This suggests that the more accurate we make an initial approximation to the field~\(u\), the more accurate the predicted evolution on the slow manifold.
Consequently, this article develops a systematic approximation to an in-principle exact discrete closure based upon the novel approach of systematically refining a piecewise linear and continuous subspace approximation to the field~\(u\).

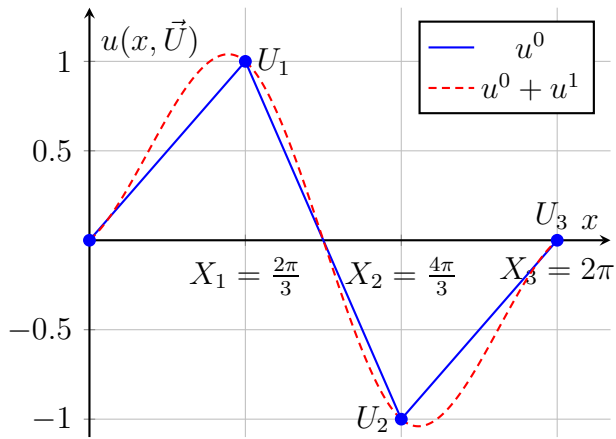
\begin{figure}
\centering
\tikzsetnextfilename{smoothInterp}
\begin{tikzpicture}[declare function = { 
	xi1(\x) = \x/2.0944; xi2(\x) = \x/2.0944-1; xi3(\x) = \x/2.0944-2; 
	u0(\x) = 0; u1(\x) = 1; u2(\x) = -1; u3(\x) = 0;
	sd2u0(\x) = -4.5*u0(\x)+3*u1(\x)-1.5*u2(\x)+3*u3(\x);
	sd2u1(\x) = -4.5*u1(\x)+3*u2(\x)-1.5*u3(\x)+3*u0(\x);
	sd2u2(\x) = -4.5*u2(\x)+3*u3(\x)-1.5*u0(\x)+3*u1(\x);
	sd2u3(\x) = -4.5*u3(\x)+3*u0(\x)-1.5*u1(\x)+3*u2(\x);
}]
\begin{axis}[ xlabel={$x$}, ylabel={$u(x,\Uv)$}
  ,axis x line=middle,axis y line=middle
  ,thick,grid,samples=101
  ,ymin=-1.1,ymax=1.3 ,domain=0:6.2832,xmax=7
  ,xtick={2.0944,4.1888,6.2832}
  ,xticklabels={$X_1=\frac{2\pi}{3}$,$X_2=\frac{4\pi}{3}$,$X_3=2\pi$}
  ,legend pos=north east
  ] 
\addplot [blue,no marks,domain=0:2.0944] {xi1(x)*u1(x)+(1-xi1(x))*u0(x)}; 
\addlegendentry{${u^0}$};
\addplot [densely dashed,red,no marks,domain=0:2.0944] {xi1(x)*u1(x)+(1-xi1(x))*u0(x)
	+1/6*(xi1(x)^3*sd2u1(x)+(1-xi1(x))^3*sd2u0(x)-xi1(x)*(sd2u1(x)-sd2u0(x))-sd2u0(x))
}; 
\addlegendentry{${u^0}+{u^1}$};
\addplot [blue,no marks,domain=2.0944:4.1888] {xi2(x)*u2(x)+(1-xi2(x))*u1(x)}; 
\addplot [blue,no marks,domain=4.1888:6.2832] {xi3(x)*u3(x)+(1-xi3(x))*u2(x)}; 
\addplot [densely dashed,red,no marks,domain=2.0944:4.1888] {xi2(x)*u2(x)+(1-xi2(x))*u1(x)
	+1/6*(xi2(x)^3*sd2u2(x)+(1-xi2(x))^3*sd2u1(x)-xi2(x)*(sd2u2(x)-sd2u1(x))-sd2u1(x))
}; 
\addplot [densely dashed,red,no marks,domain=4.1888:6.2832] {xi3(x)*u3(x)+(1-xi3(x))*u2(x)
	+1/6*(xi3(x)^3*sd2u3(x)+(1-xi3(x))^3*sd2u2(x)-xi3(x)*(sd2u3(x)-sd2u2(x))-sd2u2(x))
}; 
\addplot [blue,only marks,mark=*] coordinates 
{(0,0)(2.0944,1)(4.1888,-1)(6.2832,0)};
\node[right] at (axis cs:2.0944,1) {$U_1$};
\node[left] at (axis cs: 4.1888,-1) {$U_2$};
\node[above] at (axis cs: 6.2832,0) {$U_3\ $};
\end{axis}
\end{tikzpicture}
\caption{An example of the smooth subgrid field provided by the holistic discretisation process,
where the piecewise-linear initial approximation~${u^0}(x,\Uv)$ is smoothed by the first-order correction~${u^1}(x,\Uv)$  (for nonlinearity $\alpha=0$).
The correction~\(u^1\) forms a cubic spline; however, it is derived directly from the \pde\ itself, rather than obtained by 
imposing such an interpolation.}
\label{fig:uhat}
\end{figure}

\section{An example introduces theory and method}
\label{sec:beitm}

As an introduction to the methodology and theory, this section investigates the modelling of Burgers' \pde~\eqref{eq:burgers} on the specific domain \(-1<x<1\)\,, with basic Dirichlet boundary conditions that \(u(\pm 1,t)=0\)\,, and with viscosity \(\nu=1\) for definiteness.
For introductory simplicity, the domain space is partitioned into just two intervals, \(-1<x<0\) and \(0<x<1\).
Our aim is to model the dynamics of the whole field~\(u(x,t)\) by simply the dynamics of the grid value \(U(t):=u(0,t)\) of the field at the single, central, interior grid-point \(X=0\).

The dynamics in the two intervals need to be coupled to each other to form a solution valid over the whole domain.
Conventional numerical methods \emph{impose} an assumed interpolation field and then derive a corresponding model.
In contrast, here we craft a coupling that moderates the communication between the two intervals, and then let the \pde~\eqref{eq:burgers} itself tell us the appropriate out-of-equilibrium fields and model.
The desired full coupling between the two intervals is of \(C^1\)~continuity: \([u]=[u_x]=0\) where we introduce~\([\cdot]\) to denote the jump in value across the grid-point \(X=0\); that is, \([u]=u|_{0^+}-u|_{0^-}\)\,.
For reasons developed below, we embed Burgers' \pde~\eqref{eq:burgers} in a family of problems with the moderated coupling between intervals of
\begin{equation}
[u]=0\quad\text{and}\quad
[u_x]+2(1-\gamma)u=0 \quad\text{at }x=X=0\,;
\label{eq:cc1}
\end{equation}
that is, the field is continuous but the derivative has a discontinuity depending upon homotopy  parameter~\(\gamma\) (corresponding to the general case~\eqref{eqs:cc}).  
We derive below that \(\gamma=0\) provides a useful base to apply powerful centre manifold theory.
When \(\gamma=1\), the coupling~\eqref{eq:cc1} reverts to requiring \(C^1\)~continuity across \(x=0\) to restore the \pde\ over the entire spatial domain.

To show there is a useful (slow) centre manifold, we start with equilibria in the system (corresponding to Lemma~\ref{thm:equil}, p.\pageref{thm:equil}).
The  \pde~\eqref{eq:burgers}, with diffusivity \(\nu=1\), together with coupling conditions~\eqref{eq:cc1}, and the Dirichlet boundary conditions, has a subspace~\EE\ of equilibria: for each~\(U\),
\begin{equation}
u=(1-|x|)U\quad\text{and }\gamma=\alpha=0\,.
\label{eq:equil}
\end{equation}
The spectrum about each of these equilibria determine the manifold structure (corresponding to Lemmas~\ref{thm:dicho} and~\ref{lem:hs}, pp.\pageref{thm:dicho},\pageref{lem:hs}).
We seek linearised solutions \(u(x,t)\approx (1-|x|)U+e^{\lambda t}v(x)\) for small~\(v\):  the diffusion \pde~\eqref{eq:burgers} becomes the eigenproblem
\begin{equation}
-v_{xx}+\lambda v=0\,,
\quad\text{such that }[v]=[v_x]+2v=0\text{ at }x=0\,,
\label{eq:lindiff}
\end{equation}
with homogeneous Dirichlet boundary conditions \(v(\pm 1)=0\)\,.
\begin{itemize}
\item Corresponding to eigenvalue \(\lambda=0\) is the neutral solution \(v\propto1-|x|\) reflecting the direction of the subspace~\EE\ of equilibria.
\begin{figure}
\centering
\begin{tabular}{cc}
\tikzsetnextfilename{simpevec}
\begin{tikzpicture}
\begin{axis}[xlabel={$x$},ylabel={$v(x)$}
,domain=-1:1,axis x line=middle, axis y line=middle
,ymax=1.2,xmin=-1.1,xmax=1.2,thick,smooth]
    \addplot+[dashed,no marks] {sin(180*x)};%blue
    \addplot+[thin,no marks] {sin(4.4934*180/3.1416*(1-abs(x))};%red
    \addplot+[densely dotted,no marks] {sin(360*x)};%brown
    \addplot+[thick,no marks] {sin(7.7253*180/3.1416*(1-abs(x))};%black
%\node[below] at (axis cs: -0.8,-0.3) {$(-\pi^2)$};
%\node[below] at (axis cs: 0.9,-0.3) {$(-4\pi^2)$};
%\node[below] at (axis cs: 0.4,-0.8) {$(-59.680)$};
%\node[above] at (axis cs: -0.4,0.7) {$(-20.191)$};
\end{axis}
\end{tikzpicture}
&
\parbox[b]{0.45\linewidth}{\caption{\label{fig:simpevec}\sloppy%
Eigenfunctions~\(v(x)\) of the linearised problem~\eqref{eq:lindiff} corresponding to negative eigenvalues: blue-dashed,~\(-\pi^2\); red-thin,~\(-20.191\); brown-dotted,~\(-4\pi^2\); and black-thick,~\(-59.680\).}}
\end{tabular}
\end{figure}
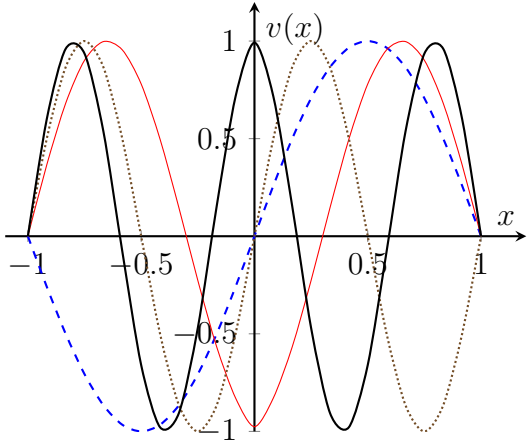

\item Some negative eigenvalues \(\lambda=-k^2\) correspond to eigenfunctions of the form \(v\propto\sin[k(1-|x|)]\).
These arise by necessity from the \pde, the homogeneous Dirichlet boundary conditions, and the continuity of~\(v\).
By straightforward algebra, the jump in the derivative determines the wavenumbers~\(k\) from the solutions of \(k=\tan k\)\,, namely the wavenumbers \(k=4.4934,7.7253,10.9041,\ldots\)\,.
That is,  non-zero eigenvalues of the linearised problem are \(\lambda=-20.191,-59.680,-118.900,\ldots\)\,.
Figure~\ref{fig:simpevec} plots (solid) the corresponding eigenfunctions for the two smallest magnitude of these eigenvalues.

\item Negative eigenvalues also arise from eigenfunctions of the form \(v\propto\sin(kx)\).  
The boundary and coupling conditions determine the wavenumbers \(k=n\pi\) for \(n=1,2,3,\ldots\)\,.
That is, the other non-zero eigenvalues are \(\lambda=-\pi^2, -4\pi^2, -9\pi^2, \ldots\)\,.
Figure~\ref{fig:simpevec} plots (dashed) the corresponding eigenfunctions for the two smallest magnitude of these eigenvalues.
\end{itemize}
One of the beautiful properties of the coupling conditions~\eqref{eq:cc1} is that with them the diffusion operator~\(\DD x{}\) is self-adjoint (analogous to Lemma~\ref{thm:sa}, p.\pageref{thm:sa}).
Hence there are only real eigenvalues of the linear problem~\eqref{eq:lindiff}, namely the ones found above.
To confirm self-adjointness under the usual inner product, \(\left<u,v\right>=\int_{-1}^1 u(x)v(x)\,dx\)\,,
consider
\begin{eqnarray*}
\left<u,v_{xx}\right>
&=&\int_{-1}^1 uv_{xx}\,dx
\quad(\text{then using integration by parts})
\\&=&\left[uv_{x}-vu_{x}\right]_{-1}^{0^-}
+\left[uv_{x}-vu_{x}\right]_{0^+}^1+\int_{-1}^1 u_{xx} v\,dx
\\&&\quad(\text{using the Dirichlet boundary conditions})
\\&=&-\left[uv_{x}-vu_{x}\right]_{0^-}^{0^+}+\left<u_{xx},v\right>
\\&&\quad(\text{using continuity at }x=0)
\\&=&-u|_0[v_{x}]+v|_0[u_{x}]+\left<u_{xx},v\right>
\\&&\quad(\text{using the jump in derivative at }x=0)
\\&=&u|_02(1-\gamma)v|_0-v|_02(1-\gamma)u|_0+\left<u_{xx},v\right>
\\&=&\left<u_{xx},v\right>.
\end{eqnarray*}
%Hence it is self-adjoint.
This useful self-adjointness is not a property of previous holistic discretisations \cite[Part~V, e.g.]{Roberts2014a}, but is a new feature established by the new approach developed herein.

Because the spectrum consists of a zero eigenvalue and all the rest negative (\(\leq -\pi^2<-9\)), centre manifold theory \cite[e.g.]{Carr81} 
%or \cite[Chap.~4]{Roberts2014a}
assures us that there exists a slow manifold in some neighbourhood of the subspace~\EE\ of equilibria (corresponding to Theorem~\ref{thm:gsm}); that is, global in amplitude~\(U\) and local in parameters~\(\gamma\) and~\(\alpha\).
Also, the theory guarantees that all solutions in the neighbourhood are attracted exponentially quickly, at least as fast as roughly~\(e^{-9t}\), to solutions on the slow manifold.
That is, the slow manifold and the evolution thereon emerges from general initial conditions.

A theorem \cite[Thm.~6.10, e.g.]{Carr81} also guarantees that when we approximate the slow manifold and its evolution to a residual of~\Ord{\gamma^p}, the slow manifold and its evolution are correct to errors~\Ord{\gamma^p}.
By straightforward machinations not detailed here~\cite[Ch.~14]{Roberts96a, Roberts2014a} we arrive at the expressions that the slow manifold and the evolution thereon are
% see tiwdbc.tex for Reduce code
\begin{equation}
u\approx\left[1-|x|+\gamma(|x|-\tfrac32x^2+\tfrac12|x|^3)\right]U
\quad\text{such that }\dot U\approx-3\gamma U\,.
\label{eq:egsm1}
\end{equation}
Substituting these expressions into the heat \pde~\eqref{eq:burgers} (\(\alpha=0\)), with the boundary and coupling conditions~\eqref{eq:cc1} we find the equations are satisfied to residual~\Ord{\gamma^2} and so the approximation theorem asserts these expressions are approximations with errors~\Ord{\gamma^2}.

\begin{figure}
\centering
\begin{tabular}{cc}
\tikzsetnextfilename{simpleshape}
\begin{tikzpicture}
\begin{axis}[small,xlabel={$x$},ylabel={$u(x,U)$}
,domain=-1:1,axis x line=middle, axis y line=middle
,ymax=1.1,xmin=-1.1,xmax=1.2,thick]
    \addplot+[no marks] {1-abs(x)};%blue
    \addplot+[dotted,no marks] {1-1.5*x^2+abs(x)^3/2};%red
    \addplot+[no marks] {cos(90*x)};%brown
\end{axis}
\end{tikzpicture}
&
\parbox[b]{0.55\linewidth}{\caption{\label{fig:simp}%
A comparison of approximations to the long-term, quasi-stationary, decay of the heat \pde: blue-solid, \(u\propto 1-|x|\) is the basic linear approximation~\eqref{eq:equil}; red-dotted, the derived cubic spline~\eqref{eq:egsm1} at full coupling \(\gamma=1\); and, almost indistinguishable, brown-solid, is the exact mode \(u\propto\cos(\pi x/2)\).
}}
\end{tabular}
\end{figure}
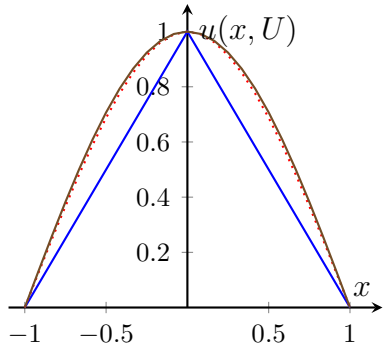

Although this approximation is based around parameter \(\gamma=0\)\,, we are interested in the physical value of the parameter \(\gamma=1\)\,.
Evaluating the slow manifold~\eqref{eq:egsm1} at \(\gamma=1\) gives 
\begin{equation*}
u\approx(1-\tfrac32x^2+\tfrac12|x|^3)U
\quad\text{such that }\dot U\approx-3 U\,.
\end{equation*}
The field~\(u\), plotted in Figure~\ref{fig:simp}, is an excellent cubic spline approximation to the correct~\(U\cos(\pi x/2)\) eigenfunction, also plotted in Figure~\ref{fig:simp}.
The predicted evolution \(U\propto e^{-3t}\) is a good approximation to the correct decay rate of~\(-\pi^2/4\)\,.

One key question is how can we be sure that evaluating at finite \(\gamma=1\) is within the finite neighbourhood of validity of the slow manifold?
Here computer algebra~\cite[Ch.~14]{Roberts96a, Roberts2014a} straightforwardly computes to high order to determine, for example, the slow evolution
\begin{equation*}
\dot U=-\big[3\gamma-0.6\gamma^2 +0.06857\gamma^3 -0.00128\gamma^5 +0.00008\gamma^6 +0.00004\gamma^7+\Ord{\gamma^8}\big]U.
\end{equation*}
Evidently the series in~\(\gamma\) appears to have a radius of convergence much larger than one.\footnote{Construction of the slow manifold to 40th~order in~\(\gamma\) (for \(\alpha=0\)) followed by a generalised Domb--Sykes plot \cite[Appendix]{Mercer90} predicts a convergence limiting singularity at \(\gamma_*=-0.9+i3.7\) (at an angle~\(103^\circ\) to the real \(\gamma\)-axis) indicating convergence for all \(|\gamma|<3.8\).}
Hence we predict that the neighbourhood of validity around~\EE\ includes the  case of interest, \(\gamma=1\)\,.

\begin{figure}
\centering
\begin{tabular}{cc}
\tikzsetnextfilename{nonlinsm}
\begin{tikzpicture}
\def\alfa{2}
\begin{axis}[xlabel={$x$},ylabel={$u(x,U)$}
,domain=-1:1,axis x line=middle, axis y line=middle
,xmin=-1.1,xmax=1.1,legend pos=north west,thick,smooth]
\foreach \uu in {2.0,1.5,1.0,0.5} {
    \addplot+[no marks] {\uu*(1-6/5*abs(x)^2-1/10*abs(x)^3+3/8*abs(x)^4-3/40*abs(x)^5)+\alfa*(\uu^2)*x*(2/5-abs(x)^2+3/4*abs(x)^3-3/20*abs(x)^4)+\alfa^2*(\uu)^3*(2/15*abs(x)^2-4/15*abs(x)^3+1/6*abs(x)^4-1/30*abs(x)^5)};
%\addlegendentryexpanded{$U=\uu$};
};
\end{axis}
\end{tikzpicture}
&
\parbox[b]{0.45\linewidth}{\caption{\label{fig:nonlinsm}%
Nonlinear slow manifold for Burgers' \pde~\eqref{eq:burgers} for viscosity \(\nu=1\) and nonlinearity \(\alpha=2\)\,.  
Drawn is the slow manifold~\(u(x,U)\) for representative amplitudes \(U=\tfrac12,1,\tfrac32,2\) to show the larger deformation at larger amplitudes.  
This approximation to the slow manifold was computed to errors~\Ord{\gamma^3+\alpha^3} and evaluated at full coupling \(\gamma=1\)\,.
}}
\end{tabular}
\end{figure}
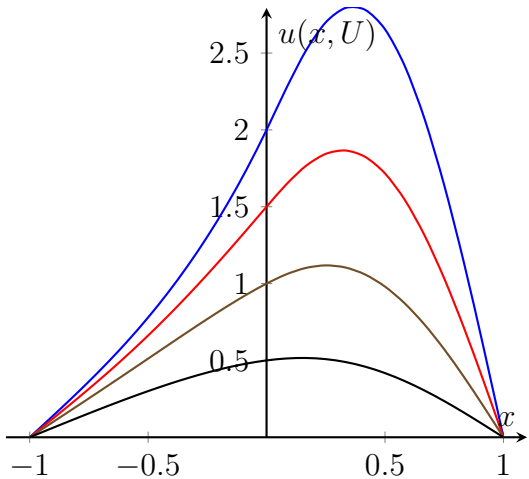

Centre manifold theory~\cite[Ch.~4, e.g.]{Carr81, Roberts2014a} was designed for nonlinear problems.
Thus it also applies here to the nonlinear Burgers' \pde~\eqref{eq:burgers} now with nonlinearity parametrised by~\(\alpha\) and similarly modelled with two intervals on the domain \(-1<x<1\)\,.
For example, modified computer algebra~\cite[Ch.~14]{Roberts96a, Roberts2014a} constructs the slow manifold plotted in Figure~\ref{fig:nonlinsm} on which the nonlinear evolution is
\begin{equation*}
\dot U=-(3\gamma+\tfrac35\gamma^2)U-\tfrac1{15}\gamma^2\alpha^2U^3
+\Ord{\gamma^3+\alpha^3}.
\end{equation*}
The nonlinear advection of Burgers' \pde\ generates steeper gradients in the subgrid field (Figure~\ref{fig:nonlinsm}) that enhance the decay as expressed by the cubic nonlinearity in this evolution equation for amplitude~\(U(t)\) \cite[cf.][\S5]{Hughes95}.

Key properties of this example are also exhibited in the application of the approach to the more general spatial discretisations discussed in subsequent sections: an analogous inter-element coupling engenders an emergent slow manifold; the linearised operator is self-adjoint; the first iteration constructs a cubic spline; and the resultant model at full coupling has attractive properties.

\section{Linearisation establishes the existence of a closure}\label{sec:lin}

We use centre manifold theory \cite[e.g.]{Carr81, Haragus2011} to establish (Theorem~\ref{thm:gsm}) the in-principle existence and emergence of a new exact closure to the dynamics of  \pde{}s in the class~\eqref{eq:genpde}.
Centre manifold theory is based upon an equilibrium or subspace of equilibria, and follows primarily from the persistence of a spectral gap in the spectrum of the linearised dynamics \cite[e.g.]{Roberts2014a}.

To find useful equilibria we embed the \pde~\eqref{eq:genpde} in a wider class of problems.
First partition the spatial domain into the \(N\)~intervals between the grid-points \(x=X_j\)\,: let the interval \(\XX_j=\{x\mid X_{j-1}<x<X_j\}\) and denote the punctured domain \(\tilde\XX:=\XX\backslash\{X_0,X_1,\ldots,X_N\}\).
For definiteness take the boundary conditions on the field~\(u(x,t)\) to be that it is \(L\)-periodic in space.
Then use \(u_j(x,t)\)~to denote solutions of the \pde~\eqref{eq:genpde} on the interval~\(\XX_j\), and reserve \(u(x,t)\), over~\XX\ or~\(\tilde\XX\) as appropriate, to denote the union over all intervals of such solutions.
To restore the original \pde~\eqref{eq:genpde} over the whole domain~\XX\ we couple the fields on each interval together.
By controlling the information flow between intervals we connect the original \pde\ over the whole domain to a useful base problem.
The general coupling conditions are  
\begin{subequations}\label{eqs:cc}%
\begin{eqnarray}
&&[u]_j=u|_{X_j^+}-u|_{X_j^-}=0
\quad\text{and }
\label{eq:ccc}
\\&&
[\nu u_x]_j=\frac{C(\gamma)}H\left[
(\nu u)|_{X_{j+1}^-} -(\nu u)|_{X_{j}^+}
+(\nu u)|_{X_{j-1}^+} -(\nu u)|_{X_{j}^-}\right],\qquad
\label{eq:ccj}
\end{eqnarray}
\end{subequations}
where coefficient \(\nu(x)=F'(u_x)\) is the effective diffusivity at each point in~\(\tilde\XX\), via the gradient~\(u_x\),
and where the factor~\(C(\gamma)\) is some smooth function such that \(C(0)=1\) and \(C(1)=0\) (typically \(C(\gamma):=1-\gamma\) as in~\eqref{eq:cc1}).

\begin{lemma}[equilibria] \label{thm:equil}
The \pde~\eqref{eq:genpde} on domain~\(\tilde\XX\) with coupling conditions~\eqref{eqs:cc} possesses an \(N\)-dimensional subspace~\EE\ of equilibria, parametrised by \(\Uv=(U_1,U_2,\ldots,U_N)\), for parameters \(\alpha=\gamma=0\)\,. 
Each equilibrium is of continuous, piecewise linear, fields~\(u^*(x)\) such that, on the \(j\)th~interval, the field 
\begin{equation}
u^{*}(x)=u_j^*(x)=(1-\xi_j)U_{j-1}+\xi_j U_j
\quad\text{where }\xi_j=(x-X_{j-1})/H
\label{eq:equil}
\end{equation}
is a local scaled space variable.
\end{lemma}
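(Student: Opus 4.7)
The plan is to set the parameters to $\alpha=\gamma=0$, solve the resulting equilibrium problem interval-by-interval, apply the continuity condition, and then verify that the jump condition~\eqref{eq:ccj} is automatically satisfied by the remaining candidates.

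First I would impose $u_t=0$ in~\eqref{eq:genpde} with $\alpha=0$, reducing it to $F(u_x)_x=0$ on each open interval~$\XX_j$. A single integration yields $F(u_x)=c_j$ for some constant $c_j$, and since $F$ is strictly monotonic increasing it has a continuous inverse, so $u_x$ itself is a constant $m_j$ on~$\XX_j$. Hence each equilibrium piece $u_j^*$ is affine in~$x$. Next I would apply the continuity condition~\eqref{eq:ccc} together with $L$-periodicity, parametrizing the affine pieces by their grid values $U_j=u^*(X_j)$. This forces exactly the form~\eqref{eq:equil}, with $U_0=U_N$ by periodicity, yielding at most an $N$-parameter family.

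The remaining, most delicate step is to check that the jump condition~\eqref{eq:ccj} at $C(0)=1$ is identically satisfied by these piecewise linear fields. Writing $\nu_j^+:=F'((U_{j+1}-U_j)/H)$ and $\nu_j^-:=F'((U_j-U_{j-1})/H)$ for the one-sided values of $\nu=F'(u_x)$ at~$X_j$, a short direct calculation gives
\begin{equation*}
[\nu u_x]_j
=\frac{\nu_j^+(U_{j+1}-U_j)-\nu_j^-(U_j-U_{j-1})}{H}
=\frac1H\bigl[(\nu u)|_{X_{j+1}^-}-(\nu u)|_{X_j^+}+(\nu u)|_{X_{j-1}^+}-(\nu u)|_{X_j^-}\bigr],
\end{equation*}
on using continuity $u|_{X_j^\pm}=U_j$ together with the piecewise-constancy identifications $\nu|_{X_{j+1}^-}=\nu_j^+$ and $\nu|_{X_{j-1}^+}=\nu_j^-$. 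Therefore every field of the form~\eqref{eq:equil} is an equilibrium, and~\EE\ is precisely the claimed $N$-dimensional subspace.

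The main point requiring care is that the effective diffusivity $\nu=F'(u_x)$ is itself piecewise constant on an equilibrium (rather than globally constant), so its distinct one-sided values at each grid-point must be tracked when evaluating~\eqref{eq:ccj}. Once this bookkeeping is done the identity above is purely algebraic, and it reveals the design principle behind~\eqref{eqs:cc}: the coupling was engineered so that the continuous piecewise linear interpolant of the grid values is automatically an equilibrium at the base parameter~$\gamma=0$, providing the anchor on which the subsequent spectral and centre manifold arguments of this section are built.
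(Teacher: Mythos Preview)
Your proof is correct and the verification of the jump condition~\eqref{eq:ccj} is essentially identical to the paper's. The one genuine difference is in how you reach the piecewise linear form: the paper simply \emph{verifies} that the candidate field~\eqref{eq:equil} satisfies $F(u^*_x)_x=0$ on each interval (since the gradient is constant there), whereas you \emph{derive} affinity by integrating $F(u_x)_x=0$ and invoking the strict monotonicity of~$F$ to invert. Your route therefore proves the slightly stronger statement that every equilibrium of the coupled system at $\alpha=\gamma=0$ lies in~\EE, not merely that~\EE\ consists of equilibria. The lemma as stated only asserts existence of the $N$-dimensional family, so the paper's pure verification suffices; your characterisation is a modest bonus that costs almost nothing and clarifies why~\EE\ is exactly $N$-dimensional rather than larger.
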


\begin{proof} 
With nonlinearity \(\alpha=0\) the \pde~\eqref{eq:genpde} takes the form \(u_t=F(u_x)_x\)\,.
For the piecewise linear field~\eqref{eq:equil}, the gradient \(u^*_{jx}=(U_j-U_{j-1})/H\) is constant on each~\(\XX_j\). 
Hence \(F(u^*_x)\) is constant on each~\(\XX_j\), and
consequently \(F(u^*_x)_x=0\) on~\(\tilde\XX\), giving an equilibria of the \pde\ on~\(\tilde\XX\).

From the field~\eqref{eq:equil},  \(u^*_j(X_j^-)=U_j=u^*_{j+1}(X_j^+)\) and hence \(u^*(x)\)~is continuous at~\(X_j\) to satisfy the coupling condition~\eqref{eq:ccc}.

Lastly, consider the condition~\eqref{eq:ccj} on the jump in the derivative.
For the field~\eqref{eq:equil}, the gradient is \(u^*_x=(U_j-U_{j-1})/H\) so, in terms of the constants 
\begin{equation}
\nu_j:=F'(u^*_{jx})=F'\big(\tfrac{U_j-U_{j-1}}H\big),
\label{eq:nuj}
\end{equation} 
the jump in gradient is
\begin{eqnarray*}
[\nu u^*_x]_j
&=&\nu_{j+1}\frac{U_{j+1}-U_j}H-\nu_j\frac{U_j-U_{j-1}}H
\\&=&\frac1H(\nu_{j+1}U_{j+1}-\nu_{j+1}U_j+\nu_jU_{j-1}-\nu_jU_j)
\\&=&\frac1H\left(\nu|_{X_{j+1}^-}u^*|_{X_{j+1}^-}
-\nu|_{X_{j}^+}u^*|_{X_{j}^+}
+\nu|_{X_{j-1}^+}u^*|_{X_{j-1}^+}
-\nu|_{X_{j}^-}u^*|_{X_{j}^-}\right)\,,
\end{eqnarray*}
which is the required right-hand side of~\eqref{eq:ccj} for coupling parameter \(\gamma=0\) (as \(C(0)=1\)).
Hence, the piecewise linear fields~\eqref{eq:equil}, with \(\alpha=\gamma=0\), are equilibria for all~\Uv, and thus form an \(N\)-D~subspace of equilibria.
\end{proof}

The spectrum comes from the linearised dynamics around each of the equilibria~\EE.
Seek solutions \(u=u^*(x)+\hat u(x,t)\) of the general \pde~\eqref{eq:genpde} where \(\hat u(x,t)\)~denotes a small perturbation to the equilibrium~\eqref{eq:equil}. 
Use \(\hat u_j(x,t)\) as a synonym for~\(\hat u(x,t)\) on the \(j\)th~interval~\(\XX_j\).
Then for parameters \(\alpha=\gamma=0\) and small~\(\hat u\), the \pde~\eqref{eq:genpde} linearises to
\begin{subequations}\label{eqs:lin}%
\begin{equation}
\hat u_t=F'(u^*_x)\hat u_{xx}=\nu(x)\hat u_{xx}\quad\text{on }\tilde\XX; 
\quad\text{that is,}\quad 
\hat u_{jt}=\nu_j\hat u_{jxx}\quad\text{on }\XX_j.
\label{eq:pdelin}
\end{equation}
The coupling conditions~\eqref{eqs:cc} are linear, so they are 
\([\hat u]_j=0\) and \([\nu\hat u_x]_j=\frac1H\big[
(\nu\hat  u)|_{X_{j+1}^-} -(\nu\hat  u)|_{X_{j}^+}
+(\nu\hat  u)|_{X_{j-1}^+} -(\nu\hat  u)|_{X_{j}^-}\big]\);
%\begin{eqnarray*}
%&&[\hat u]_{X_j^-}^{X_j^+}=0
%\quad\text{and }
%%\label{eq:ccc}
%\\&&
%[\nu\hat u_x]_{X_j^-}^{X_j^+}=\frac1H\left[
%(\nu\hat  u)|_{X_{j+1}^-} -(\nu\hat  u)|_{X_{j}^+}
%+(\nu\hat  u)|_{X_{j-1}^+} -(\nu\hat  u)|_{X_{j}^-}\right],
%%\label{eq:ccj}
%\end{eqnarray*}
that is,
\begin{eqnarray}
&&\hat u_{j+1}(X_j)=\hat u_j(X_j)
\quad\text{and }
\label{eq:ccclin}
\\&&
\nu_{j+1}\hat u_{j+1,x}(X_j)-\nu_j\hat u_{jx}(X_j)
\nonumber\\&&{}
=\frac1H\left[
\nu_{j+1}\hat u_{j+1}(X_{j+1}) -\nu_{j+1}\hat u_{j+1}(X_{j})
+\nu_j\hat  u_j(X_{j-1}) -\nu_j\hat  u_j(X_{j})\right].
\qquad\label{eq:ccjlin}
\end{eqnarray}
\end{subequations}
The next lemma certifies that this linearised system is self-adjoint and so we need only seek real eigenvalues in the spectrum.

To be definite, define the Hilbert space~\HH\ to be the set of square integrable, twice differentiable, functions on~\(\tilde\XX\).
Also define its subspace~\LL\ to be those which are additionally \(L\)-periodic.

\begin{lemma}[self-adjoint] \label{thm:sa}
The differential operator appearing in~\eqref{eqs:lin}, namely \(\cL=\nu\DD x{}\) on~\LL\ and subject to~\eqref{eq:ccclin}--\eqref{eq:ccjlin}, is self-adjoint upon using the usual inner product \(\left<v,u\right>:=\int_{\tilde\XX} vu\,dx\)\,.
\end{lemma}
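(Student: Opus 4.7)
The plan is to follow the template already supplied by the two-interval computation in Section~\ref{sec:beitm}: apply integration by parts twice on each sub-interval, collect the boundary contributions at every grid-point, and show that the coupling condition~\eqref{eq:ccjlin} is designed exactly so that the residual boundary form vanishes. First I would write $\langle u,\cL v\rangle = \int_{\tilde\XX}u\,\nu\,v_{xx}\,dx$ as a sum over the intervals~$\XX_j$, pull out the piecewise-constant coefficient~$\nu_j$ from~\eqref{eq:nuj}, and integrate by parts twice on each~$\XX_j$. The interior integrals reassemble into~$\langle\cL u,v\rangle$, so the entire task reduces to controlling the accumulated endpoint contributions $\sum_j \nu_j[u v_x-u_x v]_{X_{j-1}^+}^{X_j^-}$.

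Next I would reindex these contributions by grid-point rather than by interval. At each~$X_k$ the right endpoint of~$\XX_k$ (with coefficient~$\nu_k$) pairs with the left endpoint of~$\XX_{k+1}$ (with coefficient~$\nu_{k+1}$), and $L$-periodicity closes the cycle across $k=0,N$. This combines into $-[\nu u v_x - \nu u_x v]_k$. Invoking continuity~\eqref{eq:ccclin} so that $u(X_k)$ and $v(X_k)$ are well defined, this simplifies to
\begin{equation*}
\langle u,\cL v\rangle - \langle \cL u,v\rangle
= \sum_{k}\bigl\{v(X_k)[\nu u_x]_k - u(X_k)[\nu v_x]_k\bigr\}.
\end{equation*}

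The key step is then to substitute the coupling condition~\eqref{eq:ccjlin} for both $[\nu u_x]_k$ and $[\nu v_x]_k$. The four-term right-hand side produces, for each~$k$, two diagonal contributions of the form $v(X_k)u(X_k)$ and $u(X_k)v(X_k)$ that cancel in the antisymmetric combination, and four off-diagonal contributions that pair neighbouring grid values. What survives at each~$k$ is
\begin{equation*}
\frac{1}{H}\bigl\{\nu_{k+1}[v(X_k)u(X_{k+1}) - u(X_k)v(X_{k+1})]
+ \nu_{k}[v(X_k)u(X_{k-1}) - u(X_k)v(X_{k-1})]\bigr\}.
\end{equation*}
Setting $a_k := v(X_k)u(X_{k+1}) - u(X_k)v(X_{k+1})$, the second bracket equals~$-a_{k-1}$, and a cyclic index shift $k\mapsto k+1$ (justified by $L$-periodicity) turns $\sum_k\nu_k a_{k-1}$ into $\sum_k\nu_{k+1}a_k$. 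The two sums cancel and the whole expression telescopes to zero, proving self-adjointness.

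The main obstacle is really bookkeeping rather than substance: one must verify carefully that the four inhomogeneous terms on the right of~\eqref{eq:ccj} fit together to form a bilinear form that is antisymmetric under the cyclic shift on the grid. This is not accidental; the coupling condition was engineered precisely to make this cancellation automatic, and the calculation both proves the lemma and exposes why the form~\eqref{eq:ccj} (rather than any other) is the \emph{natural} choice for guaranteeing self-adjointness of the linearised diffusion operator.
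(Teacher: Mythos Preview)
Your proposal is correct and follows essentially the same route as the paper's proof: integrate by parts twice on each sub-interval, use continuity~\eqref{eq:ccclin} to reduce the boundary form to $\sum_k\{v(X_k)[\nu u_x]_k - u(X_k)[\nu v_x]_k\}$, substitute the jump condition~\eqref{eq:ccjlin}, cancel the diagonal $u(X_k)v(X_k)$ terms, and reindex the off-diagonal terms cyclically so that they telescope to zero. The only differences are cosmetic: the paper writes out every term explicitly step by step, whereas your introduction of $a_k := v(X_k)u(X_{k+1}) - u(X_k)v(X_{k+1})$ packages the telescoping more compactly.
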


\begin{proof} 
Straightforwardly use integration by parts (remembering that \(\nu\)~is piecewise constant):
\begin{eqnarray*}
\left<v,\cL u\right>
&=&\int_{\tilde\XX}v\nu u_{xx}\,dx
\\&=&\sum_j \left[\nu v u_x-\nu u v_x\right]_{X_{j-1}^+}^{X_j^-}
+\int_{\tilde\XX}\nu v_{xx}u\,dx
\\&=&\sum_j \left[ 
\nu_j v_j(X_j^-) u_{jx}(X_j^-)
-\nu_j u_j(X_j^-) v_{jx}(X_j^-) 
\right.\\&&\quad\left.{}
-\nu_j v_j(X_{j-1}^+) u_{jx}(X_{j-1}^+)
+\nu_j u_j(X_{j-1}^+) v_{jx}(X_{j-1}^+) \right]
+\left<\cL v, u\right>
\\&&(\text{using the continuity~\eqref{eq:ccclin} and }U_j:=u(X_j^\pm),\ V_j:=v(X_j^\pm)\,)
\\&=&\sum_j \left[ 
\nu_j V_j u_{jx}(X_j^-)
-\nu_j U_j v_{jx}(X_j^-) 
\right.\\&&\quad\left.{}
-\nu_j V_{j-1} u_{jx}(X_{j-1}^+)
+\nu_j U_{j-1} v_{jx}(X_{j-1}^+) \right]
+\left<\cL v, u\right>
\\&&(\text{reindexing the last two terms in the sum, }j\mapsto j+1)
\\&=&\sum_j \left[ 
V_j \nu_j u_{jx}(X_j^-)
-U_j \nu_j v_{jx}(X_j^-) 
\right.\\&&\quad\left.{}
- V_{j} \nu_{j+1} u_{j+1,x}(X_{j}^+)
+ U_{j} \nu_{j+1} v_{j+1,x}(X_{j}^+) \right]
+\left<\cL v, u\right>
\\&&(\text{replacing two pairs of terms via coupling~\eqref{eq:ccjlin}})
\\&=&\sum_j \left\{ 
-V_j \tfrac1H\left[ \nu_{j+1}U_{j+1} -\nu_{j+1}U_{j}
+\nu_jU_{j-1} -\nu_jU_j \right]
\right.\\&&\quad\left.{}
+ U_{j}\tfrac1H \left[ \nu_{j+1}V_{j+1} -\nu_{j+1}V_{j}
+\nu_jV_{j-1} -\nu_jV_j \right] \right\}
+\left<\cL v, u\right>
\\&&(\text{cancelling all terms in }U_jV_j)
\\&=&\sum_j \tfrac1H\left\{ 
-V_j\nu_{j+1}U_{j+1} 
-V_j\nu_jU_{j-1} 
+ U_{j} \nu_{j+1}V_{j+1} 
+U_j\nu_jV_{j-1}   \right\}
\\&&\quad{}
+\left<\cL v, u\right>
\\&&(\text{reindexing 2nd and 4th terms in the sum, }j\mapsto j+1)
\\&=&\sum_j 0
+\left<\cL v, u\right> =\left<\cL v, u\right>.
\end{eqnarray*}
Hence,  the linear operator in the linearised system~\eqref{eqs:lin} is symmetric.  
Since \(\cL:\HH\to\HH\) it is self-adjoint in~\HH. 
It can be shown that self-adjointness also holds for Dirichlet and Neumann boundary conditions.
\end{proof}

We turn to determining the spectrum of the general linearised system~\eqref{eqs:lin}: first, the zero eigenvalues; and second, the non-zero eigenvalues.
Because of the \(N\)-D~subspace of equilibria~\EE, the linearised system must have \(N\)~eigenvalues of zero.
Corresponding basis eigenfunctions may be chosen to be 
\begin{equation*}
\phi_j(x)=\max(0,1-|x-X_j|/H)
\end{equation*}
so the equilibria~\eqref{eq:equil} may be written \(u^*=\sum_j \phi_j(x)U_j\)\,.
Incidentally, the localised triangular shape of these basis functions   will be recognised by many as the fundamental ``shape function'' often invoked in the finite element method \cite[e.g.]{OLeary08, Strang2008}.
For the linearised \pde~\eqref{eq:pdelin} any eigenfunction corresponding to an eigenvalue of zero must be linear on each~\(\XX_j\), and the continuity~\eqref{eq:ccclin} then guarantees there are no other eigenfunctions than those identified.
By self-adjointness, there are no generalised eigenfunctions.
Thus the slow subspace of the system~\eqref{eqs:lin} is \(N\)-D, namely~\EE.

For rigorous theory we notionally adjoin the two trivial dynamical equations \(\alpha_t=\gamma_t=0\) to the linearised system~\eqref{eqs:lin}.
Then,  as \(\alpha=\gamma=0\)\,, the equilibria~\eqref{eq:equil} are~\((0,0,u^*(x))\).
Thus strictly there are two extra zero eigenvalues associated with the trivial \(\alpha_t=\gamma_t=0\)\,, and the corresponding slow subspace of each equilibria is \((N+2)\)-D.
Except for issues associated with the domain of validity, for simplicity we do not explicitly include these two trivial dynamical equations nor their eigenvalues in the following, but consider them implicit.

\begin{lemma}[exponential dichotomy] \label{thm:dicho}
Provided function~\(F\) in the \pde~\eqref{eq:genpde} is monotonically increasing with \(F'\geq\nu_{\min}>0\)\,, then  the operator \(\cL=\nu\DD x{}\)  subject to~\eqref{eq:ccclin}--\eqref{eq:ccjlin} in~\LL\ has \(N\)~zero eigenvalues and all other eigenvalues~\(\lambda\) are negative and bounded away from zero by \(\lambda\leq-\nu_{\min}\pi^2/H^2\).
\end{lemma}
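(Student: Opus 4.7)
My strategy is to compute the quadratic form \(\langle v,\cL v\rangle\) explicitly, combine the resulting non-positivity with self-adjointness (Lemma~\ref{thm:sa}) to pin down the zero eigenspace, and then extract a spectral gap on the orthogonal complement via a Poincar\'e inequality on each sub-interval.

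First I would integrate by parts on each interval~\(\XX_j\), remembering that \(\nu\) is piecewise constant on~\(\tilde\XX\). Writing \(V_j:=v(X_j^\pm)\) (unambiguous by the continuity condition~\eqref{eq:ccclin}), this produces
\[\langle v,\cL v\rangle=-\int_{\tilde\XX}\nu v_x^2\,dx-\sum_j V_j\,[\nu v_x]_j.\]
Substituting the jump condition~\eqref{eq:ccjlin} into the boundary sum and performing a discrete summation-by-parts should give \(-\sum_j V_j[\nu v_x]_j=\tfrac1H\sum_j\nu_j(V_j-V_{j-1})^2\), which is precisely \(\int_{\tilde\XX}\nu(v^\sharp_x)^2\,dx\), the Dirichlet energy of the continuous piecewise-linear function \(v^\sharp\in\EE\) interpolating \(v\) at the grid points. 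Decomposing \(v=v^\sharp+w\) (so that \(w\) vanishes at every~\(X_j\)) and noting that \(v^\sharp_x\) is constant on each sub-interval, the cross term \(\int_{\XX_j}\nu v^\sharp_x w_x\,dx=\nu_j v^\sharp_x\bigl(w(X_j^-)-w(X_{j-1}^+)\bigr)=0\) vanishes, collapsing the expression into the clean identity
\[\langle v,\cL v\rangle=-\int_{\tilde\XX}\nu w_x^2\,dx\le 0\,.\]

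From this identity the three claims follow quickly. Non-positivity together with self-adjointness confines the spectrum to \((-\infty,0]\); equality \(\langle v,\cL v\rangle=0\) forces \(w_x\equiv 0\) and hence (since \(w\) has zero endpoint data on each interval) \(w\equiv 0\), so \(\ker\cL=\EE\) is exactly \(N\)-dimensional, yielding the \(N\) zero eigenvalues. For any eigenfunction \(v\) with \(\lambda\ne 0\), self-adjointness forces \(v\perp\EE\), and in particular \(\langle v,v^\sharp\rangle=0\), whence \(\|w\|^2=\|v\|^2+\|v^\sharp\|^2\ge\|v\|^2\). Applying the sharp Friedrichs--Poincar\'e inequality \(\int_{\XX_j}w_x^2\,dx\ge(\pi/H)^2\int_{\XX_j}w^2\,dx\) on each sub-interval of length~\(H\) (valid because \(w\) vanishes at both endpoints), together with the hypothesis \(\nu=F'(u^*_x)\ge\nu_{\min}\), then delivers
\[-\lambda\|v\|^2=\int_{\tilde\XX}\nu w_x^2\,dx\ge\nu_{\min}(\pi/H)^2\|w\|^2\ge\nu_{\min}\pi^2\|v\|^2/H^2,\]
which is the required bound \(\lambda\le-\nu_{\min}\pi^2/H^2\).

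The main obstacle is the bookkeeping in the summation-by-parts step that converts the boundary jump sum into the Dirichlet energy of the linear interpolant~\(v^\sharp\); a single sign slip or mis-reindexing there would destroy the crucial non-positivity of \(\langle v,\cL v\rangle\). Once that identity is secured, the rest is essentially mechanical: self-adjointness pins down the null space, and the orthogonality \(v\perp v^\sharp\) together with Poincar\'e on each sub-interval delivers the uniform spectral gap, with the monotonicity of~\(F\) entering only to guarantee the uniform lower bound \(\nu\ge\nu_{\min}\).
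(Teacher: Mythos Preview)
Your proof is correct and shares the same core decomposition \(v=v^\sharp+w\) (the paper writes \(\check v+\tilde v\)) and the same key identity \(-\lambda\|v\|^2=\int_{\tilde\XX}\nu w_x^2\,dx\). Where you diverge is in the final spectral-gap step. The paper reduces to the spatially homogeneous operator \(\cL_1=\partial_x^2\) via \(\int w_x^2\,dx=\langle v,-\cL_1 v\rangle\), then invokes Rayleigh--Ritz together with a separate Lemma (the paper's Lemma~\ref{lem:hs}) that computes the homogeneous spectrum explicitly through a Floquet-style spatial-map argument to obtain \(-\lambda_1\ge\pi^2/H^2\). You bypass that lemma entirely: since \(w\) vanishes at every grid point, the sharp Friedrichs--Poincar\'e inequality on each sub-interval of length~\(H\) gives \(\int w_x^2\ge(\pi/H)^2\|w\|^2\) directly, and the orthogonality \(v\perp v^\sharp\) yields \(\|w\|^2\ge\|v\|^2\) to close the bound.

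Your route is shorter and more elementary, and it makes transparent that the constant \(\pi^2/H^2\) is really just the first Dirichlet eigenvalue on a single element. What the paper's longer route buys is additional structural information: the transfer-matrix computation in Lemma~\ref{lem:hs} yields the full admissible-wavenumber condition~\eqref{eq:kappa} for general~\(C(\gamma)\), which the paper then uses to explain how the spectral gap fills in as the coupling~\(\gamma\) varies from~0 to~1 (Figure~\ref{fig:leaspec} and the paragraph following the lemma). So your argument suffices for the lemma as stated, while the paper's detour is doing double duty for the surrounding discussion.
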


\begin{proof} 
The precisely \(N\)~zero eigenvalues are established in the two paragraphs preceding the lemma.
Lemma~\ref{thm:sa} establishes all eigenvalues of~\(\cL\) are real. 
Let \(\lambda\)~be a non-zero eigenvalue and \(v(x)\)~be a corresponding eigenfunction.
Then \(v\perp\EE\) by self-adjointness of~\cL, and, as usual,
\begin{equation*}
(-\lambda)\|v\|^2
=-\lambda\left<v,v\right>
=\left<v,-\lambda v\right>
=\left<v,-\cL v\right>.
\end{equation*}
Decompose the eigenfunction into \(v(x)=\tilde v(x)+\check v(x)\) where \(\check v(x)\)~is piecewise linear, continuous, and satisfies \(\check v(X_j)=v(X_j)\), so that \(\tilde v\)~is also continuous and \(\tilde v(X_j)=0\)\,.
Since \(\check v\in\EE\), so \(\cL\check v=0\) (the check accent on~\(\check v\) is to remind us of its piecewise linear nature).
Consequently, 
\begin{eqnarray*}
-\lambda\|v\|^2
&=&\left<v,-\cL v\right>
=\left<\tilde v+\check v,-\cL\tilde v\right>
\\&=&\left<\tilde v,-\cL\tilde v\right>
+\left<\check v,-\cL\tilde v\right>
\\&=&\left<\tilde v,-\cL\tilde v\right>
\end{eqnarray*}
as, by self-adjointness, \(\left<\check v,-\cL\tilde v\right>=\left<\cL\check v,-\tilde v\right>=\left<0,-\tilde v\right>=0\).
Thus, we proceed to derive the inequality
\begin{eqnarray*}
-\lambda\|v\|^2&=&\left<\tilde v,-\cL\tilde v\right>
=\int_{\tilde\XX}-\nu\tilde v\tilde v_{xx}\,dx
\\&=&\sum_j\left[-\nu\tilde v\tilde v_x\right]_{X_{j-1}^+}^{X_j^-}
+\int_{\tilde\XX}\nu\tilde v_x^2\,dx
\quad(\text{integrating by parts})
\\&=&\sum_j 0
+\int_{\tilde\XX}\nu\tilde v_x^2\,dx
\quad(\text{using }\tilde v(X_j^-)=\tilde v(X_j^+)=0)
\\&\geq&\nu_{\min}\int_{\tilde\XX}\tilde v_x^2\,dx
\quad(\text{as }\nu(x)\geq \nu_{\min}>0).
\end{eqnarray*}
The first consequence of this inequality is that there are no positive eigenvalues~\(\lambda\).

Secondly, relate this inequality to the spatially homogeneous problem.
Let \(\cL_1=\DD x{}\)~denote the linear operator with coupling conditions appearing in~\eqref{eqs:lin} for the special case of \(\nu(x)=\nu_j=1\) for all~\(x\) and~\(j\).
Then, by the reverse argument to that of the previous paragraph,
\begin{equation*}
\int_{\tilde\XX}\tilde v_x^2\,dx
=\cdots=\left<\tilde v,-\cL_1\tilde v\right>
=\cdots=\left< v,-\cL_1 v\right>.
\end{equation*}
But, by the Rayleigh--Ritz theorem, the smallest magnitude, non-zero, eigenvalue~\(\lambda_1\) of~\(\cL_1\) satisfies \(-\lambda_1=\min_{w\perp\EE} \left<w,-\cL_1w\right>/\|w\|^2\), and so \(\left< v,-\cL_1 v\right> \geq -\lambda_1\|v\|^2\).
Hence the inequalities give \(-\lambda\|v\|^2\geq \nu_{\min}(-\lambda_1)\|v\|^2\).
By the next Lemma~\ref{lem:hs}, \(-\lambda_1\geq\pi^2/H^2\), and so all eigenvalues satisfy \(-\lambda\geq\nu_{\min}\pi^2/H^2\) as required.
\end{proof}

\begin{lemma}[spatially homogeneous spectrum] \label{lem:hs}
The non-zero eigenvalues of the differential operator \(\DD x{}\) with coupling conditions~\eqref{eq:ccclin}--\eqref{eq:ccjlin} in~\HH\ and when \(\nu_j=1\) all satisfy \(\lambda\leq-\pi^2/H^2\).
\end{lemma}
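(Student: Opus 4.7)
The plan is to bound the Rayleigh quotient directly, re-using the integration-by-parts identity already obtained in the proof of Lemma~\ref{thm:dicho} and combining it with the standard Poincaré inequality on each sub-interval. Let $\lambda$ be any non-zero eigenvalue with eigenfunction $v$; since $\cL_1=\DD x{}$ is self-adjoint (Lemma~\ref{thm:sa}) and $\EE$ is the zero-eigenspace (Lemma~\ref{thm:dicho}), $v\perp\EE$. Decompose $v=\tilde v+\check v$ where $\check v$ is the continuous piecewise-linear interpolant of $v$ at the grid points, so that $\tilde v(X_j^\pm)=0$ for every~$j$. In the spatially homogeneous case $\nu_j=1$, the identity derived inside the proof of Lemma~\ref{thm:dicho} specialises to
\begin{equation*}
(-\lambda)\,\|v\|^2=\int_{\tilde\XX}\tilde v_x^{\,2}\,dx.
\end{equation*}

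Next, I apply the sharp Poincaré (Wirtinger) inequality on each sub-interval $\XX_j=(X_{j-1},X_j)$ of length~$H$: because $\tilde v$ satisfies homogeneous Dirichlet conditions at both endpoints, $\int_{\XX_j}\tilde v^2\,dx\leq (H^2/\pi^2)\int_{\XX_j}\tilde v_x^{\,2}\,dx$, with equality attained only by $\sin\bigl(\pi(x-X_{j-1})/H\bigr)$. Summing over $j\in\JJ$ gives
\begin{equation*}
\|\tilde v\|^2\leq \frac{H^2}{\pi^2}\int_{\tilde\XX}\tilde v_x^{\,2}\,dx=\frac{H^2}{\pi^2}(-\lambda)\,\|v\|^2.
\end{equation*}

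The last ingredient is a short Pythagorean-type step. Since $\check v$ is continuous and piecewise linear it lies in~$\EE$, and $v\perp\EE$ gives $\langle v,\check v\rangle=0$. Expanding $0=\langle\tilde v+\check v,\check v\rangle$ yields $\langle\tilde v,\check v\rangle=-\|\check v\|^2$, whence
\begin{equation*}
\|v\|^2=\|\tilde v\|^2+2\langle\tilde v,\check v\rangle+\|\check v\|^2=\|\tilde v\|^2-\|\check v\|^2\leq\|\tilde v\|^2.
\end{equation*}
Chaining the two inequalities gives $\|v\|^2\leq(H^2/\pi^2)(-\lambda)\|v\|^2$, and since $\|v\|>0$ we conclude $-\lambda\geq\pi^2/H^2$, as required.

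I do not expect a substantive obstacle; the only mildly subtle point is the sign in $\|v\|^2=\|\tilde v\|^2-\|\check v\|^2$, which is counter-intuitive because $\check v$ is defined by interpolation rather than by $L^2$-orthogonal projection onto~$\EE$. The identity is nonetheless forced by the orthogonality $v\perp\EE$ and the splitting $v=\tilde v+\check v$, and with it the Poincaré bound on each sub-interval transfers cleanly from $\tilde v$ to $v$ and delivers the sharp constant $\pi^2/H^2$.
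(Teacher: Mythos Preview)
Your argument is correct and genuinely different from the paper's. The paper proves the lemma by solving the eigenvalue \textsc{ode} explicitly on each interval (writing $v_j=A_j\cos\kappa\xi_j+B_j\sin\kappa\xi_j$), then analysing the transfer map $(A_j,B_j)\mapsto(A_{j+1},B_{j+1})$ induced by the coupling conditions. Requiring the Floquet multipliers to have unit modulus yields the wavenumber restriction $\tan(\kappa/2)\leq\kappa/(2C)$, whose smallest non-zero solution at $C=1$ is $\kappa=\pi$, giving $\lambda\leq-\pi^2/H^2$. Your route bypasses all of this: re-using the integration-by-parts identity from Lemma~\ref{thm:dicho}, applying the sharp Dirichlet--Poincar\'e inequality on each sub-interval to $\tilde v$, and then transferring the bound to $v$ via the neat identity $\|v\|^2=\|\tilde v\|^2-\|\check v\|^2$ forced by $v\perp\EE$. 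This is shorter, more elementary, and delivers the sharp constant without solving anything explicitly. The trade-off is that the paper's transfer-matrix approach yields the full spectral condition~\eqref{eq:kappa} for arbitrary~$C(\gamma)$, which the paper then exploits in the paragraph following the lemma to describe how the spectral gap fills in as $\gamma\to1$; your variational argument gives only the bound, not this finer structure. One small citation wrinkle: the fact that $\EE$ is precisely the zero-eigenspace is established in the text \emph{preceding} Lemma~\ref{thm:dicho} rather than in the lemma itself, so to keep the logic strictly non-circular you should point there, and note that the integration-by-parts identity from that proof is itself independent of Lemma~\ref{lem:hs}.
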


\begin{proof} 
For the spatially homogeneous problem~\eqref{eqs:lin}, set \(\nu(x)=\nu_j=1\)\,. 
Seeking solutions \(e^{\lambda t}v(x)\) leads to the \ode\ \(\lambda v=v''\) on~\(\tilde\XX\).
As a constant coefficient \ode, and for eigenvalues \(\lambda=-\kappa^2/H^2\) for some nondimensional wavenumber~\(\kappa\geq0\) to be determined, its general solutions are of the form \(v_j=A_j\cos\kappa\xi_j+B_j\sin\kappa\xi_j\) for coefficients~\(A_j\) and~\(B_j\) determined by the coupling conditions~\eqref{eq:ccclin}--\eqref{eq:ccjlin}. 
Consequently, the spatial derivative is \(v_{jx}=-\frac{A_j\kappa}H\sin\kappa\xi_j+\frac{B_j\kappa}H\cos\kappa\xi_j\).

Let's consider the spatial map \((A_j,B_j)\mapsto(A_{j+1},B_{j+1})\).
Continuity~\eqref{eq:ccclin} at \(x=X_j\) ($\xi_j=1$) requires 
\begin{equation*}
A_{j+1}=A_j\cos\kappa+B_j\sin\kappa=cA_j+sB_j\,,
\end{equation*}
where, for brevity in this proof, let \(c:=\cos\kappa\) and \(s:=\sin\kappa \)\,.
The derivative jump~\eqref{eq:ccjlin} at \(x=X_j\) requires
\begin{equation*}
\frac\kappa H B_{j+1}+A_j\frac\kappa H s-B_j\frac\kappa H c
=\frac{C(\gamma)}H\left[cA_{j+1}+sB_{j+1} -2A_{j+1} +A_j \right],
\end{equation*}
where we include the factor~\(C(\gamma)\) for a little more generality;
that is,
\begin{equation*}
(c-2)CA_{j+1}+(Cs-\kappa)B_{j+1}+(C-s\kappa)A_j+c\kappa B_j=0\,.
\end{equation*}
Dividing by~\(C\) and setting \(\kappa'=\kappa/C\)\, gives the equivalent
\begin{equation*}
(c-2)A_{j+1}+(s-\kappa')B_{j+1}+(1-s\kappa')A_j+c\kappa'B_j=0\,.
\end{equation*}
Considering together the two mapping equations, this spatial map has solutions \((A_j,B_j)\propto \mu^j\) for some multiplier~\(\mu\) given by vanishing determinant
\begin{eqnarray}&&
\det\begin{bmatrix} \mu-c&-s\\
\mu (c-2)+1-s\kappa'&\mu(s-\kappa')+c\kappa' \end{bmatrix}=0\,;
\nonumber
\\&&\text{that is,}\quad
\mu^2-2\frac{s-c\kappa'}{s-\kappa'}\mu+1=0\,.
\end{eqnarray}
%Hence the two possible multipliers must have product one, which reflects the left-right spatial symmetry in the linear problem~\eqref{eqs:lin}.
Hence the two possible multipliers of the spatial map are 
\begin{equation*}
\mu=\beta\pm\sqrt{\beta^2-1}\quad\text{for }\beta=\frac{s-c\kappa'}{s-\kappa'}\,.
\end{equation*}
Consequently, \(|\beta|>1\) is not possible as then there would be two (real) multipliers: one with magnitude greater than one, representing structures growing exponentially quickly to the right; and one with magnitude less than one, representing structures growing exponentially quickly to the left.
The only allowable cases occur for \(|\beta|\leq1\) when the multipliers are complex of magnitude \(|\mu|=1\), and so characterise periodic structures in space.
Since \(\kappa'-s=\kappa/C(\gamma)-\sin\kappa\geq0\)\,, the requirement \(|\beta|\leq1\) becomes \(s-\kappa'\leq c\kappa'-s\leq\kappa'-s\)\,; that is, \(2s-\kappa'\leq c\kappa'\leq\kappa'\).
The right-hand inequality is always satisfied as \(c=\cos\kappa\)\,, but the left-hand inequality requires \(2s\leq(1+c)\kappa'\), that is, \(s/(1+c)\leq\kappa'/2\)\,.
Recalling \(s=\sin\kappa\) and \(c=\cos\kappa\), this requirement becomes
\begin{equation}
\tan\frac\kappa2\leq\frac\kappa{2C}\,.
\label{eq:kappa}
\end{equation}
\begin{figure}
\centering
\begin{tabular}{cc}
\tikzsetnextfilename{spectrum}
\begin{tikzpicture}
\begin{axis}[ xlabel={$\kappa$}
  ,axis x line=middle,axis y line=middle
  ,thick,grid,samples=101
  ,ymin=-1,ymax=8 ,domain=0:17
  ,xtick={3.1416,6.2832,9.4248,12.5664,15.7080}
  ,xticklabels={$\pi$,$2\pi$,$3\pi$,$4\pi$,$5\pi$}
%  ,legend pos=outer north east
  ,legend pos=north west
  ] 
\addplot [blue,no marks] {tan(deg(x)/2)}; 
\addlegendentry{$\tan(\kappa/2)$};
\addplot [dashed,red,no marks,samples=2] {x/2}; 
\addlegendentry{$\kappa/2$};
\addplot [brown,only marks,mark=*] {0-99*(x/2<tan(deg(x)/2))};
\addlegendentry{wavenumber};
\end{axis}
\end{tikzpicture}
&
\parbox[b]{0.4\linewidth}{\caption{\label{fig:leaspec}%
The linearised ($\gamma=0$, \(C=1\)) spectrum determined by the spectral requirement~\eqref{eq:kappa}.
The thick lines along the $\kappa$-axis indicate regions of spatial wavenumbers for which the corresponding spatial structures are bounded for all space (the essential spectrum).}}
\end{tabular}
\end{figure}
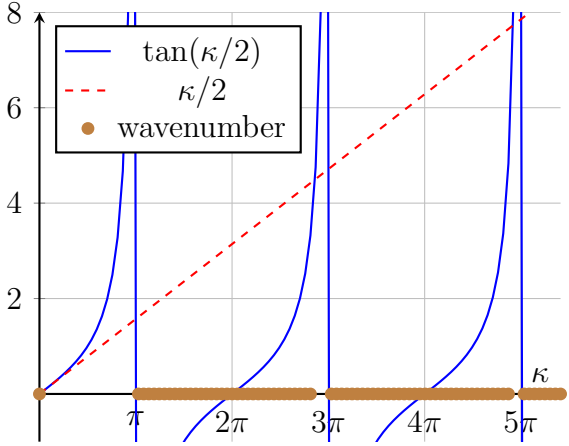%
The specific boundary conditions on the finite macroscale domain~\XX\ then constrain the allowable~\(\kappa\) to a discrete, countably infinite, set of~\(\kappa\) satisfying inequality~\eqref{eq:kappa}.
As illustrated by Figure~\ref{fig:leaspec}, for the specific case of the linearised problem~\eqref{eqs:lin} for which \(C(0)=1\)\,, there is a useful spectral gap because the smallest allowable nonzero nondimensional wavenumber is \(\kappa=\pi\)\,.
Hence the smallest magnitude nonzero eigenvalue is\({}\leq-\pi^2/H^2\) as required.
\end{proof}

The reason to include \(C(\gamma)\)~in the proof is to comment on the  linearisation about another subspace of equilibria.
As well as the piecewise linear equilibria~\EE\ at \(\gamma=\alpha=0\)\,, another subspace of equilibria is \(u={}\)constant on~\XX\ for nonlinearity \(\alpha=0\) but now for arbitrary coupling parameter~\(\gamma\).
The linearisation about this set of equilibria is also the system~\eqref{eqs:lin} but with \(\nu(x)=F'(0)\), constant, and with factor~\(1/H\) in the coupling~\eqref{eq:ccjlin} replaced by~\(C(\gamma)/H\).
The proof of Lemma~\ref{lem:hs} also applies to this case.
Inequality~\eqref{eq:kappa} then gives allowed wavenumbers for general~\(\gamma\).
As coupling parameter~\(\gamma\) varies from zero to one, the factor~\(C(\gamma)\) varies from one to zero, and so the denominator~\(C\) in inequality~\eqref{eq:kappa} increases the slope of the straight line of Figure~\ref{fig:leaspec}.
Thus the set of allowed wavenumbers increases with coupling~\(\gamma\), and, in particular, the spectral gap~\((0,\pi)\) between the slow and the fast modes fills up with the slow modes.
It is in this manner that the continuum of allowed wavenumbers is restored in the fully coupled \pde\ over the whole domain~\XX, as the coupling parameter~\(\gamma\) varies from zero to one.

\begin{theorem}[slow manifold] \label{thm:gsm}
Consider the nonlinear \pde~\eqref{eq:genpde} on domain~\(\tilde\XX\) with coupling conditions~\eqref{eqs:cc} and preconditions as recorded earlier.  
\begin{enumerate}
\item In an open domain~\cE\ containing the subspace~\EE\ there exists a slow manifold,
\begin{equation}
u=u(x,\Uv,\gamma,\alpha)
\quad\text{such that}\quad
\frac{d\Uv}{dt}=\gv(\Uv,\gamma,\alpha)
\label{eq:gensm}
\end{equation}
(generally one order less smooth than that of~\(F\) and~\(G\)).
\item\label{thm:gsm:emergent} This slow manifold is emergent in the sense that for all solutions~\(u(x,t)\) of~\eqref{eq:genpde} and~\eqref{eqs:cc}, that stay in~\cE, there exists a solution~\(\Uv(t)\) of~\eqref{eq:gensm} such that \(u(x,t)=u(x,\Uv(t),\gamma,\alpha)+\Ord{e^{-\mu t}}\) for decay rate \(\mu\approx\nu_{\min}\pi^2/H^2\).

\item Given two smooth functions \(\tilde u(x,\Uv,\gamma,\alpha)\) and~\(\tilde\gv(\Uv,\gamma,\alpha)\) for the governing equations~\eqref{eq:genpde} and~\eqref{eqs:cc}, evaluated at \(u=\tilde u\) such that \(d\Uv/dt=\tilde\gv\), having residuals~\Ord{\gamma^p+\alpha^q} as \((\gamma,\alpha)\to\vec 0\)\,,  then the slow manifold is 
\begin{equation*}
u=\tilde u(x,\Uv,\gamma,\alpha)+\Ord{\gamma^p+\alpha^q}
\quad\text{such that }
\frac{d\Uv}{dt}=\tilde\gv(\Uv,\gamma,\alpha)+\Ord{\gamma^p+\alpha^q}.
\end{equation*}
\end{enumerate}
\end{theorem}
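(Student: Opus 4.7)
The plan is to apply a standard centre manifold theorem for semilinear evolution equations, in the spirit of \cite{Haragus2011} or the infinite-dimensional version of \cite{Carr81}, to the system obtained by adjoining the trivial dynamics $\dot\gamma=\dot\alpha=0$ to the coupled \pde~\eqref{eq:genpde} on $\tilde\XX$ with conditions~\eqref{eqs:cc}. Fix an arbitrary $\Uv_0$ and the corresponding equilibrium $u^*(\cdot;\Uv_0)\in\EE$, and write $u=u^*+w$, so that the governing equations take the semilinear form $w_t=\cL w+\cN(w,\Uv_0,\gamma,\alpha)$ on an appropriate Banach space of fields satisfying the coupling~\eqref{eq:ccc}, where $\cL$ is the self-adjoint linear operator of Lemma~\ref{thm:sa} and $\cN$ gathers the terms generated by $\alpha G$, the $F$-nonlinearity beyond its linearisation, and the $\gamma$-dependent contribution of the coupling~\eqref{eq:ccj}. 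Smoothness of $F,G$ transfers to smoothness of $\cN$ with the advertised loss of one order, and Lemma~\ref{thm:dicho} furnishes the exponential dichotomy: an $N$-dimensional kernel coordinatised by $\Uv$ and a spectral gap with stable part bounded above by $-\nu_{\min}\pi^2/H^2$.

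With these ingredients in hand, the centre manifold theorem produces, in an open neighbourhood $\cE$ of $\EE\times\{(0,0)\}$ in (field, $\gamma$, $\alpha$)-space, a locally invariant manifold graphed over the centre coordinates, giving the representation~\eqref{eq:gensm}. The smoothness claim in part~1 follows from the usual loss of regularity in the graph transform. Part~2 (emergence) is the asymptotic completeness/attractivity statement that accompanies the centre manifold in the presence of a genuine spectral gap: because Lemma~\ref{thm:dicho} rules out unstable directions and bounds the stable spectrum away from zero by $\nu_{\min}\pi^2/H^2$, every trajectory that stays in $\cE$ is exponentially shadowed by a trajectory on the slow manifold at the rate $\mu\approx\nu_{\min}\pi^2/H^2$. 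Part~3 is the standard approximation theorem \cite[Thm.~3 or Thm.~6.10]{Carr81}: substituting a candidate $(\tilde u,\tilde\gv)$ into the invariance equation and finding residual $\Ord{\gamma^p+\alpha^q}$ forces the true slow manifold and its reduced vector field to agree with $(\tilde u,\tilde\gv)$ to the same order. One simply checks that the notion of residual induced by the \pde\ and coupling~\eqref{eqs:cc} matches the hypothesis of that theorem, which is immediate once $\cN$ is written in the form above.

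I expect the main obstacle to be the careful formulation of the abstract framework, specifically the treatment of the $\gamma$-dependent coupling~\eqref{eq:ccj} and of the subspace~$\EE$ of equilibria, rather than any novel analytic inequality. Two technical issues require attention. First, the coupling~\eqref{eq:ccj} is encoded in the domain of $\cL$, but the coefficient $C(\gamma)$ makes that domain $\gamma$-dependent; the standard remedy is to introduce a smooth $\gamma$-dependent lift that moves the $\gamma$-dependence from the domain into a bounded perturbation $\cL\mapsto\cL+B(\gamma)$, so that the centre manifold theorem applies on a fixed space. Second, because $\EE$ is an $N$-dimensional manifold of equilibria rather than a single point, one should either regard $\Uv$ as a coordinate on the whole centre subspace (so that the slow manifold is global in $\Uv$ and local only in $(\gamma,\alpha)$, as Theorem~\ref{thm:gsm} asserts), or glue local centre manifolds constructed at each $\Uv_0$; the equivariance of the construction under translations in $\EE$ and the uniformity of the spectral gap along $\EE$ (Lemmas~\ref{thm:dicho} and~\ref{lem:hs}) make this gluing routine. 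Once these two points are settled, the remainder of the proof is a direct citation of the chosen centre manifold theorem and its accompanying emergence and approximation corollaries.
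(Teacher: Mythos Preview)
Your proposal is correct and follows essentially the same route as the paper: verify the hypotheses of an abstract centre manifold theorem (the paper cites \cite{Haragus2011}, Theorems~2.9 and~3.22, for parts~1 and~2), using Lemmas~\ref{thm:sa}--\ref{lem:hs} to supply self-adjointness, the analytic semigroup, and the spectral gap; then glue the local centre manifolds over~$\Uv\in\EE$ by taking unions, exactly as you suggest. The one noteworthy difference is in part~3: the paper invokes Proposition~3.6 of \cite{Potzsche2006} rather than Carr's approximation theorem, and to reduce the two-parameter residual $\Ord{\gamma^p+\alpha^q}$ to a single-parameter hypothesis it sets $\gamma=c\epsilon^q$, $\alpha=a\epsilon^p$ and works in~$\epsilon$---a small device you may wish to include, since Carr's Theorem~6.10 is also stated for a single order parameter.
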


\begin{proof} 
The preconditions for the centre manifold theorems of \cite{Haragus2011} [Chapter~2] hold.
We consider twice differentiable, \(L\)-periodic, square integrable functions on~\(\tilde\XX\) which forms the requisite Hilbert spaces.
The self-adjoint, linearised operator~\eqref{eqs:lin} of diffusion on a finite spatial domain forms an analytic semigroup \cite[Remark~2.18, e.g.]{Haragus2011}, and the functions \(F\) and~\(G\) of the \pde~\eqref{eq:genpde} are assumed smooth to thus satisfy Hypothesis~2.1 and~2.7 of \cite{Haragus2011}.
Lemma~\ref{thm:dicho}, under the proviso that \(F'\geq\nu_{\min}>0\), establishes the Spectral Decomposition Hypothesis~2.4 of \cite{Haragus2011}.
\begin{enumerate}
\item Theorem~2.9 of \cite{Haragus2011} then establishes that for each point of~\EE\ (parametrised by~\Uv) a \emph{local centre manifold}~\(\cM_{\Uv}\) exists in some neighbourhood~\(\cE_{\Uv}\) in the \((\Uv,\gamma,\alpha)\)-space.
Because the centre eigenvalues are all zero (Lemma~\ref{thm:dicho}), they are more precisely called \emph{local slow manifolds}.
Setting \(\cM:=\bigcup_{\Uv}\cM_{\Uv}\) and domain \(\cE:=\bigcup_{\Uv}\cE_{\Uv}\) the slow manifold~\cM\ exists in the domain~\cE\ (containing~\EE) as required.

\item The unstable spectrum is empty (Lemma~\ref{thm:dicho}), so Theorem~3.22 of \cite{Haragus2011} applies to establish the exponentially quick emergence of the slow manifold to all solutions that remain within~\cE\ for all time.
The rate of attraction to the slow manifold in~\cE\ is estimated by the linearised rate at~\EE\ by continuity in perturbations \cite[\S11.3, e.g.]{Roberts2014a}.

\item Under corresponding preconditions, Proposition~3.6 of \cite{Potzsche2006} proves that if an approximation to the slow manifold~\eqref{eq:gensm} gives residuals of the system's equations which are zero to some order, then the slow manifold is approximated to the same order of error.
Here introduce parameter~\(\epsilon\) and set \(\gamma=c\epsilon^q\) and \(\alpha=a\epsilon^p\).
Then regard quantities as a Taylor series in~\(\epsilon\) with coefficients parametrised by~\((\Uv,c,a)\).
Also, the process \(\epsilon\to 0\) implies \((\gamma,\alpha)\to\vec 0\)\,.
By supposition, the given~\(\tilde u\) and~\(\tilde\gv\) have residuals \(\Ord{\gamma^p+\alpha^q}=\Ord{c^p\epsilon^{pq}+a^q\epsilon^{pq}}=\Ord{\epsilon^{pq}}\) as \(\epsilon\to 0\)\,.
By Proposition~3.6 of \cite{Potzsche2006}, \(\tilde u\) and~\(\tilde\gv\) approximate the slow manifold to errors~\Ord{\epsilon^{pq}}, and hence the errors are~\Ord{\gamma^p+\alpha^q}.

\end{enumerate}
The more wide ranging theorems of \cite{Aulbach96, Aulbach99, Aulbach2000} could also be invoked to establish this theorem.
\end{proof}

The evolution equation~\eqref{eq:gensm}, evaluated at full coupling, \(d\Uv/dt=g(\Uv,1,\alpha)\), is the in-principle exact closure for a discretisation of the dynamics of the nonlinear \pde~\eqref{eq:genpde}.

\subsection{The slow manifold of wave-like PDEs}
\label{sec:smwpde}

Although this article's scope is the spatial discretisation, or dimensional reduction, of reaction-advection-diffusion \pde{}s~\eqref{eq:genpde}, much of the theory usefully applies to the spatial discretisation of wave-like \pde{}s in the form
\begin{equation}
u_{tt}=F(u_x)_x+\alpha G(x,u,u_x)
\label{eq:wavpde}
\end{equation}
on a domain~\XX, and for smooth functions~\(F\) and~\(G\) as before.
This subsection comments on the similarities and differences of the theoretical support for such wave systems.

Partition space as above and apply the coupling conditions~\eqref{eqs:cc}.
Then, for \(\alpha=\gamma=0\)\,, the subspace~\EE\ of piecewise linear equilibria of Lemma~\ref{thm:equil} still exists.
Upon linearisation about each of these equilibria, the spatial differential operator \(\cL=\nu\DD x{}\) on~\(\tilde\XX\) remains self-adjoint (Lemma~\ref{thm:sa}).
The exponential dichotomy of the operator~\cL\ (Lemmas~\ref{thm:dicho} and~\ref{lem:hs}) still applies, namely that there are \(N\)~eigenvalues of zero, and the others are\({}\leq-\nu_{\min}\pi^2/H^2\).
So far, diffusion-based systems and wave-based systems are the same.

The differences in theoretical support start with Theorem~\ref{thm:gsm}.
The reason for the differences are that the eigenvalues of the right-hand side operator~\cL\ are the square of the eigenvalues of the linearisation of the wave \pde~\eqref{eq:wavpde}: seeking waves of frequency~\(\omega\) then \(|\omega|=\sqrt{-\lambda}\) and all frequencies~\(\omega\) are real as all eigenvalues are\({}\leq0\)\,.
Here the slow manifold dichotomy is now between slow waves with near zero frequency, separated from fast waves with frequencies\({}\geq\sqrt{\nu_{\min}}\pi/H\).
Such subcentre slow  manifolds are ubiquitous in geophysical applications.
However, much less is known rigorously about subcentre slow  manifolds: even their existence is problematic \cite[]{Lorenz87}.
Nonetheless, based upon recursively constructing coordinate transforms to a normal form \cite[Chap.~13]{Cox93a, Cox93b, Roberts2014a} the following `backwards' conjecture \cite[e.g.]{Grcar2011} is indicated for the wave \pde~\eqref{eq:wavpde}.
Parts of this conjecture for waves correspond to Theorem~\ref{thm:gsm} for dissipative systems.

\begin{conjecture}
Specify any order of error~\Ord{\gamma^p+\alpha^q}.
Then there exists a (multinomial) coordinate transformation and a (multinomial) \pde\ system in the new variables~\((\Uv,V(x))\) of the form
\begin{equation}
u=u(x,\Uv,V,\gamma,\alpha),\quad
\dd t\Uv=\gv(\Uv,V,\gamma,\alpha),\quad
\DD tV=H(x,\Uv,V,\gamma,\alpha)V,
\label{eq:wavct}
\end{equation}
such that  in the \(u\)-space the corresponding dynamics is the same as the \pde~\eqref{eq:wavpde} to an error~\Ord{\gamma^p+\alpha^q}, and \(u(x,\Uv,0,\gamma,\alpha)\) is tangent to the subspace~\EE\ at \(\gamma=\alpha=0\)\,.
\emph{(A difference with Theorem~\ref{thm:gsm}.3 is that here we construct a `nearby' approximating system and then base results on that.)}
\begin{enumerate}
\item Let \cE\ denote a \(u\gamma\alpha\)-domain in which the coordinate transform~\eqref{eq:wavct} is a diffeomorphism containing~\EE, then \(V=0\) is an exact slow manifold of the dynamics of~\eqref{eq:wavct}: that is, 
\begin{equation}
u=u(x,\Uv,0,\gamma,\alpha)\quad \text{such that}\quad
\dd t\Uv=\gv(\Uv,0,\gamma,\alpha).
\label{eq:wavsm}
\end{equation}
\emph{(A difference with Theorem~\ref{thm:gsm}.1 is that here we only know that there are nearby systems which have slow manifolds, but like the dissipative case, such a nearby system does possess an exact low-dimensional closure.)}

\item Solutions near, but off the slow manifold with \(V\neq 0\), generally evolve differently (due to wave-wave forcing of mean flow):
\begin{equation*}
\dd t\Uv=\gv(\Uv,0,\gamma,\alpha)+\Ord{\|V\|^2}.
\end{equation*}
\emph{(A difference with Theorem~\ref{thm:gsm}.\ref{thm:gsm:emergent} is that here the slow manifold is not exponentially attractive and instead generally acts as a dynamical centre for nearby dynamics.)}

\end{enumerate}
\end{conjecture}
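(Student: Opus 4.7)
\medskip

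\noindent\textbf{Proof proposal.}
My plan is to follow the recursive normal-form construction suggested by the paper's reference to \cite{Cox93a, Cox93b, Roberts2014a}, treating the wave PDE as a first-order evolution system and building the coordinate change iteratively in the two small parameters \(\gamma\) and \(\alpha\). First I would rewrite~\eqref{eq:wavpde} as the first-order system \(u_t = w\), \(w_t = F(u_x)_x + \alpha G(x,u,u_x)\) on the punctured domain~\(\tilde\XX\), with the coupling conditions~\eqref{eqs:cc} applied to~\(u\). At \(\gamma=\alpha=0\) the equilibria~\EE\ of Lemma~\ref{thm:equil} persist (now as equilibria \((u^*,0)\) of the first-order system), and the linearisation about any point of~\EE\ has as its right-hand side the block operator \(\bigl(\begin{smallmatrix} 0 & I \\ \cL & 0 \end{smallmatrix}\bigr)\), where~\cL\ is the self-adjoint operator of Lemma~\ref{thm:sa}. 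By Lemmas~\ref{thm:dicho}--\ref{lem:hs} this block has a \(2N\)-dimensional kernel (two zero eigenvalues per grid value, corresponding to position and velocity of each~\(U_j\)) and all other eigenvalues come in pairs \(\pm i\omega\) with \(\omega\geq\sqrt{\nu_{\min}}\,\pi/H\). Thus there is a clean spectral dichotomy between the slow coordinates~\Uv\ and the fast wave variables, which I will collect into the infinite-dimensional vector~\(V(x)\).

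Next, following the iterative normal-form algorithm, I would seek a near-identity change of variables \(u = u(x,\Uv,V,\gamma,\alpha)\) (with \(u(x,\Uv,0,0,0)\) equal to the piecewise linear equilibrium~\eqref{eq:equil}) together with evolution equations \(d\Uv/dt = \gv(\Uv,V,\gamma,\alpha)\) and \(\partial_tV = H(x,\Uv,V,\gamma,\alpha)V\), constructed order by order in \(\gamma\) and \(\alpha\). At each order one writes down the homological equation: matching coefficients of monomials in \((\gamma,\alpha)\) (and in~\(V\)) between the transformed dynamics and the original PDE produces linear equations whose solvability is controlled by the spectrum of the linear part. The crucial observation is that because the slow eigenvalues are exactly zero while the fast frequencies are bounded away from zero by \(\sqrt{\nu_{\min}}\,\pi/H\), there are no low-order resonances between the slow and fast spectra, so at each finite order one can solve the homological equation and eliminate every term of the \(V\)-equation that fails to carry an explicit factor of~\(V\). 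Critically, one insists that the coefficient of \(V^0\) in the \(V\)-equation be removed to all orders in \(\gamma,\alpha\) up to \(\gamma^p+\alpha^q\); this is exactly what makes \(V=0\) invariant for the transformed approximating system, yielding part~1 via~\eqref{eq:wavsm}. Part~2 then follows because, after the normal-form construction, the only terms of~\gv\ that can depend on~\(V\) must carry at least a factor~\(V\); conjugating~\gv\ at a fixed~\Uv\ with respect to the fast oscillation then averages the linear-in-\(V\) pieces to zero (these are wave-mean resonances with zero mean), leaving residual forcing of order~\(\|V\|^2\).

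The main obstacle is the one the paper flags explicitly: subcentre slow manifolds are not guaranteed to exist as exact invariant objects of the original~\eqref{eq:wavpde}. The normal-form construction is asymptotic, not convergent, so the manipulations above produce an approximating system~\eqref{eq:wavct} equivalent to~\eqref{eq:wavpde} only up to the prescribed residual~\Ord{\gamma^p+\alpha^q}; this is precisely why the statement is phrased as a `backwards' result, in which the exact slow manifold lives on the constructed nearby system rather than on the original wave PDE. Secondarily, one has to verify that the homological equations at each order remain solvable when~\cL\ is taken with the inter-element coupling~\eqref{eqs:cc} rather than standard boundary conditions; here the self-adjointness from Lemma~\ref{thm:sa} and the uniform spectral gap from Lemma~\ref{thm:dicho} do the heavy lifting, ensuring the linear operators in each homological equation are invertible on the fast subspace with norms bounded independently of the order of iteration. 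Finally, the existence of the diffeomorphism domain~\cE\ follows from the inverse function theorem applied to the truncated polynomial coordinate change, which is the identity on~\EE\ at \(\gamma=\alpha=0\).
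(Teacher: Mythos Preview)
The paper does not prove this statement: it is explicitly labelled a \emph{Conjecture}, motivated by the normal-form constructions of \cite{Cox93a, Cox93b, Roberts2014a} and by the observation (citing \cite{Lorenz87}) that rigorous existence of subcentre slow manifolds is problematic. There is therefore no paper proof against which to compare your attempt.

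That said, your outline is precisely the heuristic the paper invokes as motivation, and you have correctly identified the backwards-analysis character of the claim (the exact slow manifold lives on the constructed nearby system, not on~\eqref{eq:wavpde} itself). Two technical points are worth tightening. First, the kernel of the block operator \(\bigl(\begin{smallmatrix}0&I\\ \cL&0\end{smallmatrix}\bigr)\) is only \(N\)-dimensional, not \(2N\)-dimensional: for each null vector~\(\phi\) of~\cL\ the pair \((\phi,0)\) lies in the kernel, while \((0,\phi)\) is a generalised eigenvector since the block sends it to~\((\phi,0)\). The centre subspace is \(2N\)-dimensional but carries nontrivial Jordan structure, and this nilpotent part is exactly what makes the slow dynamics second order in time, consistent with~\eqref{eq:wavpde}. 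Second, your argument for part~2 via ``averaging the linear-in-\(V\) pieces to zero'' is better phrased as a non-resonance statement: because the slow eigenvalues are zero and the fast frequencies satisfy \(|\omega|\geq\sqrt{\nu_{\min}}\,\pi/H>0\), the homological equation for the \Uv-component can absorb every term linear in~\(V\) into the coordinate change, leaving only \(\Ord{\|V\|^2}\) coupling. With those adjustments your sketch is a faithful elaboration of what the paper gestures at but deliberately leaves open.
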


Consequently, we contend that the methodology developed here for constructing and using spatially discrete, finite dimensional, models of dissipative \pde{}s may be also usefully applied to wave-like \pde{}s~\eqref{eq:wavpde}.

\section{Nonlinear modelling of Burger's PDE}
\label{sec:nonlin}

This section uses Burgers' \pde~\eqref{eq:burgers} as an example of the construction of a slow manifold discrete model.
Burgers' \pde~\eqref{eq:burgers} is in the class~\eqref{eq:genpde} addressed by the theory of Section~\ref{sec:lin} and so Theorem~\ref{thm:gsm} assures us a slow manifold model exists.

Proposition~3.6 by \cite{Potzsche2006} underlies the construction as it asserts the order of error of an approximation is the same as the order of error of the residuals of the governing equations.
Given the existence of a slow manifold $u=u(x,\Uv)$ such that $\dot{\Uv}=\gv(\Uv)$, and implicitly a function of coupling~\(\gamma\) and nonlinearity~\(\alpha\), we rewrite Burgers' \pde~\eqref{eq:burgers} in the form 
\begin{equation}
\cR(u,\gv)=0 \quad\text{with residual }
 {\cR}({u},\gv) := -\D {\Uv}{{u}}\cdot\gv+\nu\DD x{u}-\alpha{u}{u}_{x} \,.
\label{eq:man}
\end{equation}

The initial approximation to the slow manifold is, in terms of the local space variable~\(\xi_j=(x-X_{j-1})/H\) defined by~\eqref{eq:equil}, the piecewise linear field
\begin{equation}
u^0=\sum_{j=1}^{N}\chi_j(x)\big[(1-\xi_j)U_{j-1}+\xi_jU_j\big]
\quad\text{where }
\chi_j(x)=\begin{cases}
1&\text{if }x\in\XX_j\,,\\
0&\text{if }x\not\in\XX_j\,.
\end{cases}
\label{eq:u0}
\end{equation}
We seek the slow manifold for the coupled and nonlinear dynamics in a multivariate power series in corresponding parameters~\(\gamma\) and~\(\alpha\).
But to simplify the algebraic construction process we follow the approach of \cite{Jarrad2001} and introduce one ordering parameter \(\varepsilon=\sqrt{\gamma^2+\alpha^2}\) and label terms depending upon their order in~\(\varepsilon\).
For example, a term in~\(\gamma^p\alpha^q\) is termed of order~\(\varepsilon^{p+q}\).
Then we seek expressions for the slow manifold in the asymptotic series
\begin{eqnarray}
  {u}(x,\Uv,\gamma,\alpha) \sim \sum_{n=0}^{\infty} {u}^{n}(x,\Uv,\gamma,\alpha),
&&
  \gv(\Uv,\gamma,\alpha) \sim \sum_{n=1}^{\infty}\gv^{n}(\Uv,\gamma,\alpha),
\label{eq:power}
\end{eqnarray}
where \(u^n\) and~\(\gv^n\) are of order~\(n\) in the order parameter~\(\varepsilon\).
The partial sums of these series are
\begin{eqnarray}
   {u}^{\langle n\rangle}(x,\Uv,\gamma,\alpha) 
   := \sum_{p=0}^{n}{u}^{p},
&&
  \gv^{\langle n\rangle}(\Uv,\gamma,\alpha) 
  := \sum_{p=1}^{n}\gv^{n}.
\end{eqnarray}
Then ${u}={u}^{\langle n\rangle}+\Ord{\varepsilon^{n+1}}$ and
${u}^{\langle n\rangle}={u}^{\langle n-1\rangle}+u^n$; 
likewise for $\gv^{\langle n\rangle}$ and~$\gv^n$.
Substituting these into the governing \pde~\eqref{eq:man} and rearranging we deduce
\begin{equation*}
 {\cR}\left({u}^{\langle n\rangle},\gv^{\langle n\rangle}\right) = 
  -\D {\Uv}{{u^0}}\cdot\gv^n +\nu\DD x{u^n}
  +{\cR}\left({u}^{\langle n-1\rangle},\gv^{\langle n-1\rangle}\right)
+\Ord{\varepsilon^{n+1}}\,.
\end{equation*}
Hence to require the residual \({\cR}\big({u}^{\langle n\rangle},\gv^{\langle n\rangle}\big) = \Ord{\varepsilon^{n+1}}\), the process is to iteratively solve
\begin{equation}
\nu\DD x{u^n} = \D {\Uv}{{u^0}}\cdot\gv^n-{\cR}\big({u}^{\langle n-1\rangle},\gv^{\langle n-1\rangle}\big)
\label{eq:order-n}
\end{equation}
for corrections~\(u^n\) to the subgrid field and corrections~\(\gv^n\) to the slow manifold closure of the evolution.

The computer algebra code listed in the Ancillary Material (Appendix~\ref{sec:aa}) confirms the following algebraic summary.

\paragraph{First order approximation}

Obtain the first approximation by solving~\eqref{eq:order-n} for the case \(n=1\) given the initial subspace approximation~\eqref{eq:u0}.
Defining the backward difference operator $\nabla:=1-\sigma^{-1}$, equation~\eqref{eq:order-n} becomes
\begin{eqnarray*}
\nu\DD x{u^1} = \sum_{j=1}^{N}\chi_j(x)\left\{\big[\xi_j+(1-\xi_j)\sigma^{-1}\big]g^1_{j}
   +\alpha\left(\big[\xi_j+(1-\xi_j)\sigma^{-1}
\big]U_j\right)\frac{1}{H}\nabla U_j\right\}.
\end{eqnarray*} 
Spatially integrating twice gives
\begin{eqnarray*}
\nu{u^1} & = &  \sum_{j=1}^{N}\chi_j(x)\left\{d_j+H\xi_j c_j+\frac{H^2}{6}\big[\xi_j^3+(1-\xi_j)^3\sigma^{-1}\big]g^1_{j}
\right.\nonumber\\
&& \qquad\left.{}
+\frac{\alpha H}{6}\left(\big[\xi_j^3+(1-\xi_j)^3\sigma^{-1}\big]U_j\right)\,\nabla U_j\right\}.
\end{eqnarray*}
\begin{itemize}
\item The inter-element continuity condition~\eqref{eq:ccc} requires that ${u^n}(X_j,\Uv)=0$ for $n=1,2,3,\ldots$,
because ${u^0}(X_j,\Uv)=U_j=u(X_j,\Uv)$. 
Hence, we solve for~$d_j$ at $\xi_j=0$ and~$c_j$ at $\xi_j=1$, giving
\begin{eqnarray}
\nu{u^1} & = &  \sum_{j=1}^{N}\chi_j(x)\left\{\frac{H^2}{6}\big[\xi_j^3+(1-\xi_j)^3\sigma^{-1}
-\xi_j\nabla-\sigma^{-1}\big]g^1_j
\right.\nonumber\\&& \qquad\left.{}
+\frac{\alpha H}{6}\left(\big[\xi_j^3+(1-\xi_j)^3\sigma^{-1}-\xi_j\nabla-\sigma^{-1}\big]U_j\right)\,\nabla U_j\right\}
\nonumber\\
& = &  \sum_{j=1}^{N}\chi_j(x)\left\{\frac{H^2}{6}\cI_1g^1_j
+\frac{\alpha H}{6}(\cI_1U_j)\,\nabla U_j\right\},
\qquad\label{eq:u1g1}
\end{eqnarray}
where it is convenient to introduce interpolation operators ${\cI}_0(\xi_j):=\xi_j+(1-\xi_j)\sigma^{-1}$ and
${\cI}_1(\xi_j):=\xi_j^3+(1-\xi_j)^3\sigma^{-1}-\xi_j\nabla-\sigma^{-1}$ (observe that ${\cI}_1''=6{\cI}_0/H^2$).

\item The inter-element smoothness condition~\eqref{eq:ccj} determines the value of~$g^1_j$.
Substitute~\(u^1\) from~\eqref{eq:u1g1} into~\eqref{eq:ccj} and we require
\begin{equation*}
-\frac{\nu\gamma}{H}\delta^2 U_j = -H\left(1+\frac{1}{6}\delta^2\right)g^1_j 
-\frac{\alpha}{3}\left(U_j\mu\delta U_j+\mu\delta U_j^2\right).
\end{equation*}
For this to be satisfied we set
\begin{equation}
g^1_j=S\left[\frac{\nu\gamma}{H^2}\delta^2 U_j
-\frac{\alpha}{3H}U_j\mu\delta U_j-\frac{\alpha}{3H}\mu\delta U_j^2
\right],
\label{eq:g1}
\end{equation}
where operator $S:=(1+\delta^2/6)^{-1}$.
\end{itemize}
Combining these with the initial approximation gives the slow manifold
\begin{subequations}\label{eqs:}%
\begin{eqnarray}&&
 {u} = \sum_{j=1}^{N}\chi_j(x)\left\{{\cI}_0 U_j+\frac{H^2}{6\nu}{\cI}_1g^1_j
+\frac{\alpha H}{6\nu}{\cI}_1 U_j\,\nabla U_j\right\}+\Ord{\varepsilon^2},
\\&&
  \dot{U}_j  = S\left[\frac{\nu\gamma}{H^2}\delta^2 U_j
-\frac{\alpha}{3H}U_j\mu\delta U_j-\frac{\alpha}{3H}\mu\delta U_j^2
\right]+\Ord{\varepsilon^2}.
\label{eq:dudt1}
\end{eqnarray}
\end{subequations}
Apart from the nonlocal operator~$S$, this discrete closure~\eqref{eq:dudt1} is just the mixture model~\eqref{eq:mixture}
with $\theta=\frac{2}{3}$. 
This parameter value is exactly the critical value predicted by \cite{Fornberg73} to be necessary  for the stability of numerical integration of the mixture model with $\nu=0$ and $\alpha=1$.
%We demonstrate in Section~\secref{numeric} how this stability arises in low-dimensional models.

\paragraph{Connection to a cubic spline}
An intriguing property of the operator~\(S=(1+\delta^2/6)^{-1}=6(\sigma+4+\sigma^{-1})^{-1}\) is that it is precisely the operator found in constructing a cubic spline interpolation through equi-spaced data. For example,
if the general cubic spline for the $j$th~interval is specified as 
\(S_j(x)=a_jH^3\xi_j^3/6+b_jH^2\xi_j^2/2+c_jH\xi_j+d_j\), 
then its second derivative at the left-hand end of the interval
is given by $b_j=S\delta^2 d_j/H^2$,
and the corresponding first and third derivatives by
$c_j=\nabla\sigma d_j/H-H(2+\sigma)b_j/6$
and $a_j=\nabla\sigma b_j/H$, respectively 
 \cite[e.g.]{BurdenFaires85}.
As $d_j=U_{j-1}$ in our example, comparison with the first-order approximation derived above reveals that 
the holistic approach ensures a cubic spline approximation when $\alpha=0$ and $\gamma=1$.

\paragraph{Higher order approximations}

Higher order terms in the asymptotic series for~${u}$  may be systematically computed by iteratively solving equation~\eqref{eq:order-n} after having first computed~$\gv^n$ (as implemented in the computer algebra code of the Ancillary Material, Appendix~\ref{sec:aa}). 
The solvability condition determines the latter \cite[]{Jarrad2001}, namely that the right-hand side of equation~\eqref{eq:order-n} must be orthogonal to the null-space of the adjoint of~$\nu\DD x{}$. 
Since the operator is self-adjoint, we isolate the boundary between the~$j$th and $(j+1)$th~intervals with the
triangular finite-element
\begin{eqnarray}
  \hat{v}_0 = \chi_j(x)\xi_j + \chi_{j+1}(x)(1-\xi_{j+1}),
\end{eqnarray}
which satisfies both the continuity condition~\eqref{eq:ccc} and the smoothness condition~\eqref{eq:ccj}
(for $\gamma=0$). 
Now, recall that $u^n(X_j,\vec{U})=0$ for $n\ge 1$,
and hence $[\nu u^n_x]_j=0$ for $n\ge 2$.
Thus, taking the inner product of equation~\eqref{eq:order-n} with~$\hat{v}_0$ gives rise to the solvability condition
\begin{eqnarray}
 HS^{-1}g^n_{j}
-\left< {\cR}\big({u}^{\langle n-1\rangle},\gv^{\langle n-1\rangle}\big), \hat{v}_0 \right>
& = & 0 \quad\text{ for } n=2,3,\ldots\,.
\end{eqnarray}

The higher order advection terms in~$\alpha$ and the interactions between~$\alpha$ and~$\gamma$ rapidly become more complex. 
For example, the $\gamma\alpha$-terms are 
\begin{eqnarray}
	g^{2}_{j} & = & \frac{1}{H}\left\{
  - \frac{1}{10}S\left(U_{j} S\mu\delta U_{j}\right)
  - \frac{1}{6}S\left(U_{j} \mu\delta U_{j}\right)
  + \frac{1}{10}S\left(SU_{j} \mu\delta U_{j}\right)
  \right. \nonumber\\  && \left.{}
- \frac{1}{5}S^{2}\left(U_{j} S\mu\delta U_{j}\right)
  + \frac{13}{30}S^{2}\left(U_{j} \mu\delta U_{j}\right)
  - \frac{1}{15}S^{3}\left(U_{j} \mu\delta U_{j}\right)
  \right. \nonumber\\&& \left.{}
  - \frac{1}{15}S^{3}\mu\delta U_{j}^2
  + \frac{7}{30}S^{2}\mu\delta U_{j}^2
  + \frac{2}{5}U_{j} S\mu\delta U_{j}
  - \frac{11}{30}S\mu\delta U_{j}^2
\right\}
\nonumber\\&&{}
+\Ord{\gamma^2,\alpha^2}.
\qquad
\label{eq:gamma-alpha}
\end{eqnarray}
In contrast, the terms purely in the homotopy parameter~$\gamma$ represent smoothing corrections to the
diffusion; for example,
the coarse dynamics of the diffusion equation ($\alpha=0$)  obey 
\begin{eqnarray}
	\dot{U}_j & = & \frac{\nu\gamma}{H^2}S\delta^2 U _j
+ \frac{\nu\gamma^2}{60H^2}(7-2S)S^2\delta^4 U_j
\nonumber\\
&&{}+ \frac{\nu\gamma^3}{6300H^2}(94-73S+14S^2)S^3\delta^{6}U_j
+ \Ord{\gamma^4}.
\label{eq:diffusion}
\end{eqnarray}
Figure~\ref{fig:diff:spectrum} shows that each additional term in this expansion provides a better approximation to the full continuum dynamics. In particular, since 
$S=\left(1+\frac{1}{6}\delta^2\right)^{-1}\sim 1-\frac{1}{6}\delta^2+\frac{1}{36}\delta^4+\cdots$,
then the first term of equation~\eqref{eq:diffusion} gives 
$S\delta^2 u=H^2 u_{xx}+\Ord{H^4}$ from the relevant Taylor series expansion.
Further, the addition of the second term (for $\gamma=1$) gives
$\left(1-\frac{1}{12}\delta^2\right)\delta^2 u=H^2 u_{xx}+\Ord{H^6}$,
and the addition of the third term gives
$\left(1-\frac{1}{12}\delta^2+\frac{1}{90}\delta^4\right)\delta^2 u=H^2 u_{xx}+\Ord{H^8}$.
In comparison,
observe that $S\mu\delta u\sim\left(1-\frac{1}{6}\delta^2\right)\mu\delta u=H u_x+\Ord{H^5}$;
hence, the conservative term in equation~\eqref{eq:g1} (for $\alpha>0$) is of a higher order approximation than the advective term,
and the latter will require extra corrective terms to provide the same order of accuracy, as demonstrated by
the $\gamma\alpha$-terms of equation~\eqref{eq:gamma-alpha}.

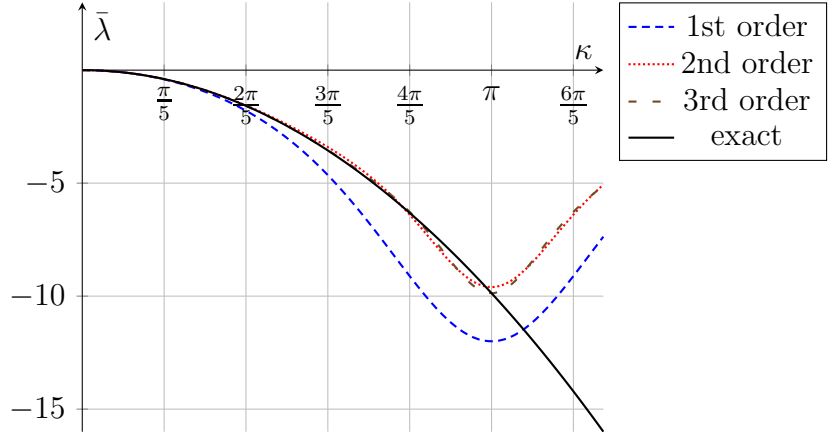
\begin{figure}
\centering
\tikzsetnextfilename{decayRates}
\begin{tikzpicture}
\begin{axis}[ xlabel={$\kappa$}, ylabel={$\bar{\lambda}$}
  ,axis lines=middle
  ,grid,no marks,samples=101
  ,domain=0:4,ymax=3
  ,xtick={0.62832,1.25664,1.88496,2.51328,3.1416,3.76992}
  ,xticklabels={$\frac{\pi}{5}$,$\frac{2\pi}{5}$,$\frac{3\pi}{5}$,$\frac{4\pi}{5}$,$\pi$,$\frac{6\pi}{5}$}
  ,legend pos=outer north east
  ] 
\addplot+[densely dashed,thick] {-6*(1-cos(deg(x)))/(2+cos(deg(x)))}; 
\addlegendentry{1st order};
\addplot+[densely dotted,thick] {-1/5*(96+27*cos(deg(x))-72*cos(deg(x))^2-51*cos(deg(x))^3)/(8+12*cos(deg(x))+6*cos(deg(x))^2+cos(deg(x))^3)}; 
\addlegendentry{2nd order};
\addplot+[loosely dashed,thick] {1/175*(-13824-17010*cos(deg(x))+4050*cos(deg(x))^2+15525*cos(deg(x))^3+8910*cos(deg(x))^4+2349*cos(deg(x))^5)
/(32+80*cos(deg(x))+80*cos(deg(x))^2+40*cos(deg(x))^3+10*cos(deg(x))^4+cos(deg(x))^5)
}; 
\addlegendentry{3rd order};
\addplot+[thick]  {0-(x^2)}; 
\addlegendentry{exact};
%\node[left] at (axis cs: 3.9,-15) {exact};
\end{axis}
\end{tikzpicture}
\caption{The  non-dimensionalised spectrum 
($\bar{\lambda}=\lambda H^2/\nu$ versus $\kappa=kH$)
of the diffusion equation
($\alpha=0$) for the mode $\tilde{u}(x,t)=e^{\lambda t+ik x}$, 
contrasting the continuum dynamics  against successive discrete, holistic approximations (for $\gamma=1$).}
\label{fig:diff:spectrum}
\end{figure}

\section{Dynamical stability of the discretisation}
\label{sec:numeric}

To investigate the theoretical stability of discretsations to Burgers' equation~\eqref{eq:burgers}, we consider a mostly undisturbed system where $U_j=0$ at all grid-points except for $M$~adjacent, internal points. 
For example, for $M=2$ it suffices to choose $N=M+1=3$ intervals with
outer points fixed at $U_0=U_N=0$. Hence, with the transformation $U_j=\frac{\nu}{\alpha H}V_j$, 
the mixture model~\eqref{eq:mixture} reduces to
\begin{eqnarray*}
\frac{H^2}{\nu}\dot{V}_1 & = & -2V_1+V_2-\frac{(1-\theta)}{2}V_1V_2-\frac{\theta}{4}V_2^2\,,
\\
\frac{H^2}{\nu}\dot{V}_2 & = & V_1-2V_2+\frac{(1-\theta)}{2}V_1V_2+\frac{\theta}{4}V_1^2\,.
\end{eqnarray*}
This reduced system has a stable critical point at $V_1=V_2=0$ with non-dimensionalised eigenvalues 
$\bar{\lambda}=\frac{H^2}{\nu}\lambda=-1, -3$, and 
an unstable critical point at $V_1=-V_2=\frac{12}{2-3\theta}$
with eigenvalues $\bar{\lambda}=\frac{2}{2-3\theta}
\pm\frac{|4-9\theta|}{|2-3\theta|}$. 
Observe that the unstable point is removed to infinity when $\theta=\frac{2}{3}$.
This is exactly the critical value predicted by \cite{Fornberg73} to be necessary (but not always sufficient)
for numerical stability of the mixture model with $\nu=0, \alpha=1$.
Consequently, the corresponding reduction of the holistic model~\eqref{eq:holistic1}, namely
\begin{eqnarray*}
\frac{H^2}{\nu}\dot{V}_1 & = & -4V_1+\frac{11}{4}V_2-\frac{1}{12}V_1^2-\frac{3}{8}V_1V_2-\frac{7}{24}V_2^2\,,
\\
\frac{H^2}{\nu}\dot{V}_2 & = & \frac{11}{4}V_1-4V_2+\frac{7}{24}V_1^2+\frac{3}{8}V_1V_2+\frac{1}{12}V_2^2\,,
\end{eqnarray*}
is unconditionally stable with critical point at $V_1=V_2=0$ and eigenvalues $\bar{\lambda}= -\frac{5}{4}, -\frac{27}{4}$.

Similarly, for $M=3$ consecutive points the mixture model~\eqref{eq:mixture} reduces to
\begin{eqnarray*}
\frac{H^2}{\nu}\dot{V}_1 & = & -2V_1+V_2-\frac{(1-\theta)}{2}V_1V_2-\frac{\theta}{4}V_2^2\,,
\\
\frac{H^2}{\nu}\dot{V}_2 & = & V_1-2V_2+V_3-\frac{(1-\theta)}{2}V_2(V_3-V_1)-\frac{\theta}{4}(V_3^2-V_1^2)\,,
\\
\frac{H^2}{\nu}\dot{V}_3 & = & V_2-2V_3+\frac{(1-\theta)}{2}V_2V_3+\frac{\theta}{4}V_2^2\,.
\end{eqnarray*}
Substitution of $V_1=aV_2$ and $V_3=bV_2$ then leads to
\begin{eqnarray}
V_1=\frac{\mu(4-\mu\theta)}{8+2\mu(1-\theta)}\,, &
V_2=\mu\,, & V_3=\frac{\mu(4+\mu\theta)}{8-2\mu(1-\theta)}\,,
\label{eq:mu-crit}
\end{eqnarray}
where $\mu$ satisfies
\begin{eqnarray}
\mu[\theta(1-\theta)(\theta^2-3\theta+1)\mu^4+16(2\theta^2-4\theta+1)\mu^2-256] = 0\,.
\end{eqnarray}
The trivial critical point corresponding to $\mu=0$ is unconditionally stable.
Observe that the coefficient of~$\mu^4$ vanishes at $\theta=0, 1$ and $\theta_c=\frac{3-\sqrt{5}}{2}$, and that the resulting quadratic equation only possesses real roots
$\mu=\pm 4$ for $\theta=0$ (the purely advective model). However, the critical point~\eqref{eq:mu-crit} is removed to infinity at exactly these roots, 
%%so the purely advective form ($\theta=0$) is now also unconditionally stable for $M=3$.
and so there are no unstable critical points when the $\mu^4$-term vanishes. 
In general, a pair of unstable critical points arise only when $0<\theta<\theta_c$.
Note that this unstable regime excludes the holistic parameter value of $\theta=\frac{2}{3}$.

Turning now to numerical simulation, we assume a $2\pi$-periodic domain with $\nu=1$ and $\alpha=1$ for
convenience. The initial field $u(x,0)=A\sin x$ is integrated at the $N+1$ grid-points $X_j=Hj$ for all $j\in\JJ$,
with spacing $H=\frac{2\pi}{N}$. The integration is performed for a maximum duration of $T=10$, but ceases early at the first sign of either: an instability, detected when $|U_j|>1000$ (denoted by~'\(\times\)'); or a non-monotonic 
irregularity (denoted by~'+'). 
For each number~$N$ of discretised intervals, a search is made over values of~$A$, both positive and negative,
for which instability or irregularity first occurs, as plotted in Figure~\ref{fig:numeric1}.
\begin{figure}
\centering
\includegraphics{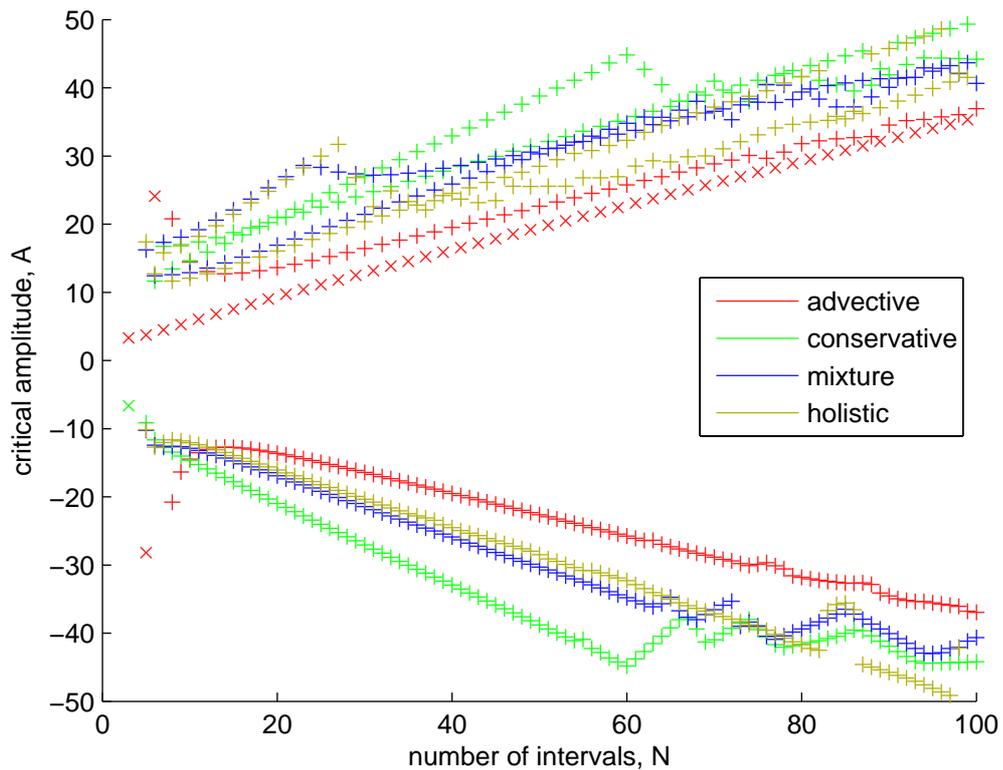}
\caption{Stability of numerical integration of Burgers' \pde, starting from $u(x,0)=A\sin x$. The holistic model
is contrasted with the general mixture model for $\theta=0$ (advective), $\theta=1$ (conservative) and
$\theta=\frac{2}{3}$ (mixture). 
Instability (detected when $|u|>1000$) is denoted by~`\(\times\)', and irregularity
(the presence of non-monotonic modes) is denoted by `+'.}
\label{fig:numeric1}
\end{figure}

The numerical results support the above theoretical results, namely that for $N=3$ intervals ($M=2$ internal points), 
only the advective model ($\theta=0$) and conservative model ($\theta=1$) display instabilities, and
that for $N=4$ none of the simulated models (for $\theta=0$ and $\theta>\theta_c$) show instability, nor irregularity.
Overall, the advective model continues to display instability for odd~$N$, and shows irregularity for even~$N$, 
both of which occur for lower~$|A|$ than the other models.
In contrast, the other models are susceptible to irregularity but not instability, with the critical values
of amplitude~$A$ roughly inversely proportional to the number~$N$ of intervals, 
and thus proprtional to the grid-spacing~$H$. None of the conservative, mixture or holistic models inherently outperforms the others in this measure.
But an advantage of the holistic approach is the rigorous theoretical support, the automatic smooth cubic spline approximation to the out-of-equilibrium subgrid fields, and the automatic derivation of practical sound closures
with unambiguous approximation of the spatial derivatives.

\section{Conclusion}

%What is the significance of your findings?
%What are the implications of your conclusions for this topic and for the broader field?
%Are their any limitations to your approach?
%Are there any other factors of relevance that impact upon the topic but fell outside the scope of the essay?
%Are their any suggestions you can make in terms of future research? 

Holistic discretisation has proved to have a number of attractive properties when applied to the general class of 
diffusive \pde{}s described in Section~\ref{sec:intro}.
In particular, it empowers centre manifold theory, discussed in Section~\ref{sec:lin}, to iteratively refine an initial approximation to a field~$u(x,t)$ whilst
incorporating the dynamics the relevant \pde{}. The resulting approximation is a function of the discrete grid values~$U_j(t)$, the \pde{} parameters, and an introduced homotopy parameter~$\gamma$ that controls
continuity and smoothness. 

The use of a piecewise-linear initial approximation~$u^0$ to~$u$, discussed in Section~\ref{sec:lin}, 
has been shown to be especially effective in conjunction with the holistic approach.
With the inner continuity and smoothness conditions, for instance,
the self-adjointness of the diffusion operator~$\cL$ is preserved under periodic, Dirichlet or Neumann outer boundary conditions. 
This holds for all $\gamma\in[0,1]$, although the usual Neumann condition holds exactly only for $\gamma=1$ and requires modification for $\gamma<1$ (governed by the chosen smoothness condition).
As noted in Section~\ref{sec:nonlin}, also interesting is that
with the addition of the first-order holistic correction~$u^1$, the approximation $u^0+u^1$ for the diffusion equation ($\alpha=0$) 
is an exact cubic spline representation of~$u$ in terms of~$U_j$.
Notably, this cubic spline is defined in terms of the nonlocal operator~$S$, which appears naturally in the holistic derivation of 
the coarse dynamics. 
Holistic analysis of Burgers' equation shows that the derived approximation to~$u$ only approached $C^2$~continuity for high orders of~$\gamma$. 
There remains scope for research into finding an alternative form of the smoothness condition that would lead to $C^n$~smoothness for $\Ord{\gamma^{n+1}}$ approximations.

Another useful facet of holistic discretisation is that it eliminates the ambiguity inherent in choosing appropriate discrete approximations to the spatial derivatives in the \pde{}. 
Section~\ref{sec:nonlin} demonstrated that the induced discretisation follows directly from the \pde{}, as a function of the initial approximation~$u^0$. 
Furthermore, at least in the case of Burgers' equation with a piecewise-linear initial approximation, Section~\ref{sec:numeric} indicated that the holistic discretisation automatically favours numerically stable approximations. 
Indeed, the iterative refinement provided by the holistic procedure acts to improve the order of approximation of spatial derivatives in terms of the discrete grid-spacing~$H$, as discussed in Section~\ref{sec:nonlin}. 
This is somewhat akin to the process of deriving a geometric integration scheme, but applied spatially rather than temporally.
It would be interesting to compare the stability of the fully second-order holistic approximation against the mixture model with correspondingly higher-order spatial derivative approximations.

Finally, another active research direction is the extension of
piecewise-linear holistic discretisation to two or more spatial dimensions, analogous to the results of \cite{Roberts2011a}. 
In general, there is exciting scope for exploring
many more applications of this new approach.

\paragraph{Acknowledgements}
AJR thanks the ARC for partial support of this project through grant DP150102385.

\bibliographystyle{agsm}
\IfFileExists{ajr.sty}
{\bibliography{bibexport,bib,ajr}}
{\bibliography{bibexport}}

\appendix
\section{Ancillary material: computer algebra}
\label{sec:aa}
The following computer algebra code constructs successive slow manifold approximations to Burgers' \pde~\eqref{eq:burgers}.
It is written in the freely available language Reduce.
\footnote{\url{http://www.reduce-algebra.com}}

\verbatimlisting{burgers3.red}

\end{document}